\newtheorem{thm}{Theorem}[section]
\newtheorem{la}[thm]{Lemma}
\newtheorem{Defn}[thm]{Definition}
\newtheorem{Remark}[thm]{Remark}
\newtheorem{prop}[thm]{Proposition}
\newtheorem{Example}[thm]{Example}
\newtheorem{Number}[thm]{\!\!}
\newenvironment{example}{\begin{Example}\rm}{\end{Example}}
\newenvironment{rem}{\begin{Remark}\rm}{\end{Remark}}
\newenvironment{numba}{\begin{Number}\rm}{\end{Number}}
\newenvironment{proof}{{\noindent\bf Proof.}}%
                  {\nopagebreak\hspace*{\fill}$\Box$\medskip\par}
\newcommand{\cW}{{\mathcal W}}
\newcommand{\cL}{{\mathcal L}}
\newcommand{\cV}{{\mathcal V}}
\newcommand{\cS}{{\mathcal S}}
\newcommand{\ve}{\varepsilon}
\newcommand{\R}{{\mathbb R}}
\newcommand{\N}{{\mathbb N}}
\newcommand{\mto}{\mapsto}
\newcommand{\sub}{\subseteq}
\newcommand{\wb}{\overline}
\newcommand{\cg}{{\mathfrak g}}
\newcommand{\cm}{{\mathfrak m}}
\DeclareMathOperator{\id}{id}
\DeclareMathOperator{\Diff}{Diff}
\DeclareMathOperator{\Evol}{Evol}
\DeclareMathOperator{\op}{op}
\DeclareMathOperator{\ap}{ap}
\begin{document}
\begin{center}
{\bf\Large {\boldmath$L^1$}-regularity of strong ILB-Lie groups}\\[4mm]
{\bf Helge Gl\"{o}ckner and Ali Suri}
\end{center}
\begin{abstract}
\hspace*{-4.8mm}If $G$ is a Lie group modeled
on a Fr\'{e}chet space,
let $e$ be its neutral element and $\cg:=T_eG$
be its Lie algebra.
We show that every strong ILB-Lie group~$G$
is $L^1$-regular in the sense
that each $\gamma\in L^1([0,1],\cg)$
is the right logarithmic
derivative of some
absolutely continuous curve
$\eta\colon [0,1]\to G$
with $\eta(0)=e$
and the map $\Evol\colon L^1([0,1],\cg)\to C([0,1], G)$, $\gamma\mto\eta$
is smooth.
More generally, the conclusion holds
for a class of Fr\'{e}chet-Lie groups
considered by Hermas and Bedida.
Examples are given.
Notably, we obtain
$L^1$-regularity
for certain weighted diffeomorphism
groups.\vspace{2mm}
\end{abstract}
{\bf Classification:}
22E65 (primary);
34A12,
%
46E30,
%
46E40.\\[2.3mm]
%
%
%
{\bf Key words:}
Absolute continuity, Carath\'{e}odory solution,
regular Lie group, Fr\'{e}chet-Lie group,
inverse limit, projective limit, measurable regularity.
\section{Introduction and statement of the results}
Regularity is an essential concept in the theory
of infinite-dimensional
Lie groups, and a common hypothesis
for many results
(cf.\ \cite{Mil,JLT,Nee,GaN}).
Consider a Lie group~$G$
modeled on a sequentially complete locally
convex space,
with neutral element~$e$
and Lie algebra $\cg:=T_eG$.
For $g\in G$, let $R_g\colon G\to G$, $x\mto xg$
be the right multiplication with~$g$.
We consider the natural right action
\[
TG \times G \to TG,\quad (v,g)\mto v.g:=TR_g(v)
\]
of~$G$
on its tangent bundle~$TG$.
Following~\cite{Mil}, $G$
is called \emph{regular}
if the initial value problem
\begin{equation}\label{ivp}
\dot{y}(t)=\gamma(t).y(t),\quad y(0)=e
\end{equation}
has a (necessarily unique) solution
$\eta\colon [0,1]\to G$
for each $C^\infty$-curve $\gamma\colon [0,1]\to\cg$
and the map
\[
C^\infty([0,1],\cg)\to G,\quad G,\quad \gamma\mto\eta(1)
\]
is smooth (or, equivalently, the map $\Evol\colon C^\infty([0,1],\cg)\to C([0,1],G)$,
$\gamma\mto\eta$ is smooth,
cf.\ \cite[Theorem~A]{SEM}).\footnote{We replaced left-invariant
vector fields with right-invariant vector fields,
compared to~\cite{Mil}.}
It became clear later that also strengthened
regularity properties are of interest.
If $k\in \N_0\cup\{\infty\}$
and $C^\infty$-maps can be replaced with $C^k$-maps in the preceding,
then $G$ is called \emph{$C^k$-regular}
(see~\cite{SEM}).
Let $p\in [1,\infty]$. If (\ref{ivp})
has a
(necessarily unique)
Carath\'{e}odory solution
$\eta\colon [0,1]\to G$
for each $\gamma\in L^p([0,1],\cg)$
and the map $\Evol\colon L^p([0,1],\cg)\to C([0,1],G)$,
$\gamma\mto\eta$ is smooth,
then $G$ is called $L^p$-regular
(see Definition 4.37 and Theorem~4.3.9
in~\cite{Nik}; cf.\ \cite{MEA} for the case
of Fr\'{e}chet-Lie groups).
By~\cite{SEM,MEA,Nik},
\[
\mbox{$L^1$-regularity}\;\Rightarrow\;\mbox{$L^p$-reg.}\;\Rightarrow\,
\mbox{$L^\infty$-reg.}\;\Rightarrow\;
\mbox{$C^0$-reg.}\;\Rightarrow\;\mbox{$C^k$-reg.}\;\Rightarrow\;
\mbox{regularity.}\vspace{1mm}
\]
Many of the most important examples
of Fr\'{e}chet-Lie groups
are
so-called
strong ILB-Lie groups
in the sense of~\cite{OMO, Omo}.
These
subsume Lie groups of smooth\linebreak
diffeomorphisms
of compact manifolds~\cite{O70}
and relevant subgroups
like
volume-preserving diffeomorphisms,
symplectomorphism groups,
and others
(see \cite{OMO,Omo,RaS}
and the references therein,
cf.\ also \cite{EaM}).
It is known that every strong ILB-Lie group
is $C^0$-regular.\footnote{Endow
$C^1_*([0,1],G)=\{\eta\in C^1([0,1],G)\colon\eta(0)=e\}$
and $C([0,1],G)$ with their standard
Lie group structures.
Then
$\Lambda\colon C^1([0,1],G)_*\to C([0,1],G)$, $\eta\mto\eta$
is a smooth group homomorphism.
By the Second Fundamental Theorem in \cite{OMY},
the right logarithmic derivative
\[
\delta^r\colon C^1_*([0,1],G)\to
C([0,1],\cg),\quad \eta\mto(t\mto \dot{\eta}(t).\eta(t)^{-1})
\]
is a $C^\infty$-diffeomorphism for
each strong ILB-Lie group~$G$.
As a consequence, the evolution map
$\Evol=\Lambda\circ (\delta^r)^{-1}\colon C([0,1],\cg)\to C([0,1],G)$
is smooth
and thus $G$
is $C^0$-regular.}
Our goal is to show that every
strong ILB-Lie group actually is $L^1$-regular,
and thus has the strongest
possible regularity property.
More generally, the conclusion will hold
for a class of Fr\'{e}chet-Lie groups
recently introduced by Hermas and Bedida~\cite{HaB},
which subsumes all strong ILB-Lie groups
(see \cite[pp.\,632--633]{HaB}).
Compared to strong ILB-Lie groups,
the Lie groups of Hermas-Bedida
are defined by a smaller number of axioms,
which are more easily checked in practice.
So far, only $C^\infty$-regularity
has been established for the latter groups (see \cite[Theorem~2.3]{HaB}).\\[2.3mm]
In the following, $\N:=\{1,2,\ldots\}$.
For $k\in\N\cup\{0,\infty\}$,
we use the symbol~$C^k$
for the $C^k$-maps of Keller's
$C^k_c$-theory~\cite{Kel} (going back to Bastiani~\cite{Bas}),
as in \cite{GaN,Ham,Mil,Nee}.
By contrast, $k$ times continuously
Fr\'{e}chet differentiable maps
between open subsets of normed spaces
shall be called \emph{$FC^k$-maps}.
Deferring details,
the setting
of~\cite{HaB}
can be outlined as follows.
\begin{numba}\label{thesett}
Let $G_1$ be a group and $G_1\supseteq G_2\supseteq\cdots$
be a descending sequence of subgroups of~$G_1$.
For each $n\in\N$,
let a smooth manifold structure
modeled on Banach spaces be given on~$G_n$.
Assume that
\begin{itemize}
\item[(a)]
For all $n,k\in\N$,
the map
\[
\mu_{n+k}^n\colon G_{n+k}\times G_n\to G_n,\quad (g,h)\mto gh
\]
is~$C^k$ (where $gh$ is the product of $g$ and $h$ in the group~$G_n$).
\end{itemize}
Moreover, three conditions
(b), (c), (d) are imposed (see~\ref{bcd})
ensuring that
\[
G_\infty:=\bigcap_{n\in\N}G_n\vspace{-1mm}
\]
can be given a smooth manifold
structure making it the projective limit
$G_\infty={\displaystyle\lim_{\displaystyle\leftarrow}}\,G_n$
as a smooth manifold.
The latter makes~$G_\infty$ a Fr\'{e}chet-Lie group.
\end{numba}
When we speak about \emph{Fr\'{e}chet-Lie groups
of the class considered by Hermas and Bedida},
we refer to such Lie groups~$G_\infty$.
Every strong ILB-Lie group belongs to this class
(see~\cite[pp.\,632--633]{HaB}).
Our main result
is the following.\vspace{1mm}
\begin{thm}\label{main-thm}
Every Fr\'{e}chet-Lie group
of the class considered
by Hermas and Bedida
is $L^1$-regular.
Notably,
every strong ILB-Lie group is $L^1$-regular.
\end{thm}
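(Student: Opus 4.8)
The plan is to reduce the $L^1$-regularity of $G_\infty$ to an approximation and
projective-limit argument built on the Banach-manifold structures of the $G_n$.
First, I would fix $\gamma\in L^1([0,1],\cg)$ and recall that
$\cg=T_eG_\infty=\lim_{\leftarrow}\cg_n$ with $\cg_n=T_eG_n$; writing
$\gamma_n\in L^1([0,1],\cg_n)$ for the image of~$\gamma$ under the canonical
projection, the key point is that the Carath\'{e}odory initial value problem
$\dot{\eta}_n(t)=\gamma_n(t).\eta_n(t)$, $\eta_n(0)=e$ is solvable in the Banach-Lie
group~$G_n$. On a Banach-Lie group one has an honest Lipschitz right-hand side
locally (the right action is smooth, hence locally Lipschitz on charts), so classical
Carath\'{e}odory theory gives a unique absolutely continuous local solution, and I would
use condition~(a) of~\ref{thesett} (the $C^k$-maps $\mu_{n+k}^n$) to propagate solutions
between the levels: a solution at level $G_{n+k}$ maps to a solution at level $G_n$,
so the $\eta_n$ are compatible and assemble to an absolutely continuous curve
$\eta\colon[0,1]\to G_\infty$ with $\delta^r(\eta)=\gamma$. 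Global existence on all
of $[0,1]$ (no blow-up) should follow because the flow of a right-invariant time-dependent
field on a Lie group is complete once it exists for short time, the bound being controlled
by $\int_0^1\|\gamma_n(t)\|\,dt$ via Gr\"onwall on each Banach chart.

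Second, and this is the crux, I would establish smoothness of
$\Evol\colon L^1([0,1],\cg)\to C([0,1],G_\infty)$. Since $C([0,1],G_\infty)=
\lim_{\leftarrow}C([0,1],G_n)$ as a smooth manifold (the projective limit of Banach
manifolds, using condition~(b)--(d) of \ref{thesett} applied levelwise), it suffices to
show each $\Evol_n\colon L^1([0,1],\cg_n)\to C([0,1],G_n)$ is smooth and that these are
compatible with the bonding maps. Compatibility is immediate from uniqueness of
Carath\'{e}odory solutions together with~(a). For smoothness at the Banach level I would
invoke the known theory of $L^1$-regularity of Banach-Lie groups
(this is exactly the $p=1$ case treated in \cite{Nik}; a Banach-Lie group is $L^1$-regular),
or reprove it by the standard route: realize $\Evol_n$ locally as the solution operator of
a parameter-dependent Carath\'{e}odory equation on a Banach space and apply a smooth
dependence theorem for such equations (differentiating under the integral / Picard iteration
with the parameter carried along, giving $FC^\infty$ dependence, hence $C^\infty$ in Keller's
sense since the model spaces are Banach). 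The translation from $FC^k$ on Banach spaces to
the $C^k_c$-calculus is routine because the two notions agree on Banach spaces up to a shift
in~$k$, and the projective limit then upgrades levelwise $C^\infty$ to $C^\infty$ on $G_\infty$.

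Third, I would check the pieces needed to glue: that the map
$C([0,1],\cg)\to C([0,1],G_n)$ factoring through $L^1$ is continuous and indeed smooth as a
map on $L^1$ (the inclusion $C^\infty\hookrightarrow L^1$ being continuous linear, one first
gets $C^\infty$-regularity from~\cite[Theorem~2.3]{HaB} as a sanity check, then strengthens
the domain to $L^1$), and that the curve $\eta$ produced is genuinely absolutely continuous
into $G_\infty$ — here "absolutely continuous into a Fr\'{e}chet manifold" is tested by
composition with charts, so it reduces to absolute continuity at each Banach level, which we
already have. Finally, I would note the promised specialization: every strong ILB-Lie group
is of the Hermas--Bedida class by \cite[pp.\,632--633]{HaB}, so the theorem applies verbatim.

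\textbf{Main obstacle.} The serious work is the smoothness of $\Evol_n$ on the
\emph{$L^1$} space (not $C^0$): one must handle the non-pointwise, merely integrable
time dependence, control the local solution operator on Banach charts uniformly enough to
differentiate it arbitrarily often in the initial data and in $\gamma_n\in L^1$, and verify
that the resulting higher derivatives are again continuous multilinear maps out of $L^1$.
Everything else — existence of Carath\'{e}odory solutions, no blow-up, compatibility across
levels, and passing to the projective limit — is comparatively soft and follows from
condition~(a) of~\ref{thesett}, Gr\"onwall estimates, and uniqueness.
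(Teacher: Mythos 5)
There is a genuine gap, and it sits exactly where your proposal leans hardest: you treat the levels $G_n$ as Banach-Lie groups and invoke the known $L^1$-regularity of Banach-Lie groups (and, for global existence, the completeness of right-invariant flows on a Lie group). In the Hermas--Bedida/ILB setting the $G_n$ are \emph{not} Banach-Lie groups: they are Banach manifolds on which multiplication is only defined with a loss of regularity across levels, namely $\mu_{n+k}^n\colon G_{n+k}\times G_n\to G_n$ is $C^k$, while multiplication $G_n\times G_n\to G_n$ at a fixed level need not be smooth at all (think of $\Diff^k$ of a compact manifold). Consequently the right translation $R^{n,n+k}_g$ is only finitely differentiable, the right-invariant ``vector field'' $\gamma(t).y$ at level $n$ only makes sense and is only $C^2$ in $y$ if the Lie-algebra values are taken from a \emph{higher} level (the paper uses $f_n(x,v)=TR^{n,n+3}_x(v)$ with $v\in\cm_{n+3}=T_eM_{n+3}$), and neither the classical translation argument for no-blow-up nor the Banach-Lie-group $L^1$-regularity theorem is available. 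If the $G_n$ really were Banach-Lie groups with a projective limit chart, the statement would already follow from \cite[Proposition 7.8]{MEA}, as the paper itself points out for $C^\infty(M,G)$ -- so your reduction replaces the actual problem by a much easier one that does not occur here. The same issue undermines your compatibility/projective-limit step: ``$\Evol_n$ smooth on $L^1([0,1],\cg_n)$'' is not even a well-posed assertion in this setting, because the driving curve at level $n$ must come from level $n+3$ and there is no evolution map intrinsic to $G_n$.

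The paper's proof works around exactly these obstructions by different means, and any correct argument has to do something of this kind. Global existence on $[0,1]$ is obtained not by Gr\"{o}nwall/completeness but by subdividing $[0,1]$ into $N$ pieces on which $\gamma$ has small $\cL^1$-norm (a quantitative Carath\'{e}odory lemma with right-hand side linear in the control gives solutions on each piece), and then multiplying the piecewise solutions together; this multiplication costs one level of regularity (a solution at level $n+1$, right-translated by an element of $M_n$, solves the level-$n$ equation), which is why the index bookkeeping $n\mapsto n+3, n+4$ appears. The same smallness argument keeps each factor in a prescribed identity neighbourhood, which is how one sees that the limit curve actually lands in the open subset $G\subseteq M_\infty$ -- a point your sketch does not address. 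Finally, the smoothness of $\Evol\colon L^1([0,1],\cg)\to C([0,1],G)$, which you correctly identify as the crux but propose to settle by a non-applicable Banach-level theorem, is obtained in the paper without any differentiation of solution operators: by \cite[Theorem 1.6]{ANA}, once every $\gamma\in L^1([0,1],\cg)$ admits an evolution and $\Evol$ is continuous at $0$ (which again follows from the small-$\cL^1$-norm estimates), smoothness is automatic. So the architecture you would need is: quantitative local Carath\'{e}odory theory for the maps $f_n$ with shifted indices, a subdivision-and-multiplication scheme for global existence and for membership in $G$, compatibility across levels and passage to the projective limit, and the automatic-smoothness theorem -- not levelwise $L^1$-regularity of Banach-Lie groups.
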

We mention
some examples
beyond the paradigms of strong ILB-Lie groups
already mentioned.
\begin{example}\label{exa-fix}
For each $d\in\N$
and compact, convex subset $K\sub\R^d$
with non-empty interior~$K^0$,
let
$\Diff_{\partial K}(K)$
be the Fr\'{e}chet-Lie group
of all $C^\infty$-diffeomorphisms
$\gamma\colon K\to K$
fixing the boundary $\partial K$ pointwise
(see~\cite{CVX}).
Then $\Diff_{\partial K}(K)$
belongs to the class
considered
by Hermas and Bedida,
and thus $\Diff_{\partial K}(K)$
is $L^1$-regular.
In the literature, only $C^0$-regularity had been established so far
(see~\cite{CVX}).\footnote{After we identified $\Diff_{\partial K}(K)$
as an example of our approach, an alternative
(more complicated) strategy to $L^1$-regularity of this group
was worked out in~\cite{Cha}, under supervision of the first author.
The latter can also be deduced from the $L^1$-regularity of
$\Diff_L(\R^d)$ (the Lie group of $C^\infty$-diffeomorphisms
$\psi\colon \R^d\to\R^d$ such that $\psi(x)=x$ for all $x\in \R^d\setminus L$)
for a compact subset $L\sub\R^d$ with $K$ in its interior
(as established in \cite{MEA}).}
\end{example}
\begin{example}\label{exa-polytope}
For each $d\in\N$ and each convex polytope
$K\sub\R^d$ with $K^0\not=\emptyset$,
the group $\Diff(K)$
of all $C^\infty$-diffeomorphisms
of~$K$
is a Fr\'{e}chet-Lie group~\cite{POL}.
It belongs to the class
considered by Hermas and Bedida,
whence it is $L^1$-regular.
\end{example}
\begin{example}\label{exa-sun-topa}
For $d\in\N$,
let
$\Diff_{\ap}(\R^d)$
be the Fr\'{e}chet-Lie group
of almost periodic smooth
diffeomorphisms of~$\R^d$
constructed by Sun and Topalov~\cite{SaT}.
Then $\Diff_{\ap}(\R^d)$
belongs to the class
considered by Hermas and Bedida,
whence $\Diff_{\ap}(\R^d)$
is $L^1$-regular.
So far, only the existence of an
exponential map had been
established in~\cite{SaT}.
\end{example}
We shall deduce Theorem~\ref{main-thm}
from a
technical result
(Proposition~\ref{technical})
dealing with
Fr\'{e}chet-Lie groups~$G$
which are an open subset
of an intersection
$\bigcap_{n\in\N}M_n$
of smooth Banach manifolds~$M_n$
which need not be groups.
The proposition applies,
in particular, to certain weighted
diffeomorphisms groups.
To formulate the application,
let $(X,\|\cdot\|_X)$
be a Banach space
and $\Diff(X)$
be the group of all $C^\infty$-diffeomorphisms
$\gamma\colon X\to X$.
If $U\sub X$ is an open subset,
$(Y,\|\cdot\|_Y)$ a Banach space,
$\ell\in \N\cup\{0,\infty\}$
and
$\cW$
a set of functions
$w\colon U\to\R\cup\{\infty\}$
such that the constant
function $1\colon U\to \R$
is in~$\cW$, one can define a
certain locally convex space
$FC^\ell_\cW(U,Y)$
of weighted $FC^\ell$-functions $\gamma\colon U\to Y$
(see \cite{Wal}).
If $\ell=\infty$ or $X$ has finite dimension,
we relax notation and abbreviate
$C^\ell_\cW(U,Y):=FC^\infty_\cW(U,Y)$.
Walter~\cite{Wal} turned
\[
\Diff_\cW(X):=
\left\{\gamma\in \Diff(X)\colon \gamma-\id_X,\,
\gamma^{-1}-\id_X\in C^\infty_\cW(X,X)\right\}
\]
into a regular Lie group modeled on $C^\infty_\cW(X,X)$.
If $\cW$ is countable, then $C^\infty_\cW(X,X)$
is a Fr\'{e}chet space and hence
$\Diff_\cW(X)$ a Fr\'{e}chet-Lie group.
One can show
that Proposition~\ref{technical}
applies to $\Diff_\cW(X)$ in this case
and deduce:
\begin{prop}\label{main-2}
For every Banach space $(X,\|\cdot\|_X)$
and countable set $\cW$ of functions
$w\colon X\to\R\cup\{\infty\}$ such that $1\in\cW$,
the weighted diffeomorphism group
$\Diff_\cW(X)$ is $L^1$-regular.
\end{prop}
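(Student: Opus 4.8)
The plan is to verify that $\Diff_\cW(X)$ satisfies the hypotheses of Proposition~\ref{technical} and then invoke that proposition. Since $\cW=\{w_1,w_2,\ldots\}$ is countable with $1\in\cW$, we may assume (after replacing each finite sub-collection by its pointwise maximum, which does not change the space $C^\infty_\cW(X,X)$) that $\cW=\{v_n\colon n\in\N\}$ is an increasing sequence $1=v_1\le v_2\le\cdots$ of weight functions. For each $n\in\N$ let $\cW_n:=\{v_1,\ldots,v_n\}$ and consider the Banach space $FC^n_{\cW_n}(X,X)$ of weighted $FC^n$-maps, with $M_n$ an open neighbourhood of $\id_X-\id_X=0$ in $FC^n_{\cW_n}(X,X)$ corresponding (via $\gamma\mapsto\gamma-\id_X$) to those $\gamma$ which are $C^\infty$-diffeomorphisms of $X$ with $\gamma-\id_X,\gamma^{-1}-\id_X$ lying in the relevant weighted spaces; equivalently one works directly with the Banach manifolds $G_n$ of such weighted-$FC^n$ diffeomorphisms, as in Walter~\cite{Wal}. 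The point is that $C^\infty_\cW(X,X)=\bigcap_{n\in\N}FC^n_{\cW_n}(X,X)$ as a projective limit of Banach spaces, so that $\Diff_\cW(X)$ is an open subset of $\bigcap_{n\in\N}G_n$, and the Fr\'echet-Lie group structure of $\Diff_\cW(X)$ from~\cite{Wal} is the projective limit structure.

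Next I would check the composition/multiplication estimates. The key input is that composition and inversion of weighted $FC^\ell$-maps lose finitely many derivatives in a controlled way: if $\gamma$ is weighted-$FC^{n+k}$ and $\eta$ is weighted-$FC^n$, then $\gamma\circ\eta$ (and hence the group product in $G_n$, after accounting for the $\id_X$ shifts) depends on the pair in a $C^k$ fashion as a map $G_{n+k}\times G_n\to G_n$. This is precisely the ``$\Omega$-lemma''-type statement underlying Walter's construction, adapted to the weighted setting; it gives hypothesis~(a) of~\ref{thesett}, and the analogous conditions~(b),(c),(d) (charts, the inclusions $G_{n+1}\hookrightarrow G_n$ being smooth with appropriate tangent maps, and the projective limit being a Fr\'echet-Lie group) are likewise contained in or readily extracted from~\cite{Wal}. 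In addition, Proposition~\ref{technical} requires the Banach manifolds $M_n$ to be modeled on spaces for which the relevant $L^1$-theory on the Banach level is available; this holds because each $FC^n_{\cW_n}(X,X)$ is a Banach space and Banach-Lie groups (indeed Banach manifolds with the requisite smoothness of multiplication) are $L^1$-regular on each level, with Carath\'eodory solutions and smooth dependence obtained by the standard Banach-space fixed-point/ODE machinery.

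With the hypotheses of Proposition~\ref{technical} in place, that proposition yields $L^1$-regularity of $\Diff_\cW(X)$: every $\gamma\in L^1([0,1],C^\infty_\cW(X,X))$ is the right logarithmic derivative of a unique absolutely continuous curve $\eta\colon[0,1]\to\Diff_\cW(X)$ with $\eta(0)=\id_X$, and $\Evol$ is smooth. The main obstacle I anticipate is not the abstract limit argument but the verification of the weighted composition estimates with the correct loss of derivatives --- i.e.\ showing that $\mu^n_{n+k}$ is genuinely $C^k$ (not merely continuous or $C^{k-1}$) in the weighted $FC$-topologies, uniformly enough that the constants behave under the projective limit. This requires carefully tracking how the weights $v_j$ interact with the chain rule and with the bound $v_j\circ\eta\lesssim v_{j'}$ for suitable $j'$ (a compatibility one must either assume on $\cW$ or arrange by enlarging $\cW$); once that bookkeeping is done, everything else reduces to citing~\cite{Wal} and Proposition~\ref{technical}.
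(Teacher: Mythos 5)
Your overall strategy coincides with the paper's: realize $\Diff_\cW(X)$ as an open subset of an intersection of Banach manifolds with a composition map that loses finitely many derivatives, and invoke Proposition~\ref{technical} (the paper itself only asserts that this verification can be done). The genuine gap lies in your concrete choice of the finite-order levels. In your first formulation you take for $M_n$ the set of $C^\infty$-diffeomorphisms $\gamma$ with $\gamma-\id_X$, $\gamma^{-1}-\id_X$ in the weighted spaces, modeled via $\gamma\mapsto\gamma-\id_X$ on $FC^n_{\cW_n}(X,X)$; but neither ``$\gamma$ is a $C^\infty$-diffeomorphism'' nor the condition on $\gamma^{-1}$ is an open condition in the $FC^n_{\cW_n}$-topology, so conditions (c) and (d) of~\ref{new-sett} (charts of $M_n$ obtained by restricting charts of $M_1$, with open images in $E_{n,j}$) fail for this choice. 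In your alternative formulation you appeal to ``Banach manifolds $G_n$ of weighted-$FC^n$ diffeomorphisms, as in Walter'', but \cite{Wal} constructs no finite-order weighted diffeomorphism groups (only the $C^\infty$-level Lie group), and establishing that such sets are open in the affine Banach space and stable under inversion would require precisely the nontrivial weighted composition/inversion estimates you defer. Moreover, by citing hypothesis~(a) of~\ref{thesett} you are implicitly forcing yourself into the Hermas--Bedida setting, in which the levels are groups; the entire point of the generalization in Section~5 is that for weighted diffeomorphism groups the natural finite-order levels are \emph{not} groups. The intended levels are (suitable open subsets of) the affine Banach spaces $\id_X+FC^n_{\cW_n}(X,X)$, with composition as a merely partial, one-sidedly unital, associative multiplication; no invertibility is needed, and (a1), (a3), (a4) are then immediate.

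With that correction, the decisive remaining issue is exactly the one you flag but do not prove: condition (a2), i.e.\ that $(\gamma,\eta)\mapsto\gamma\circ\eta$ maps $M_{n+k}\times M_n$ into $M_n$ and is $C^k$ in the weighted $FC$-topologies (a weighted $\Omega$-lemma with loss of $k$ derivatives), including the stability of $M_n$ under such products and the compatibility of the weights with right composition; a bare citation of \cite{Wal} does not settle this, since Walter's composition results are formulated at the $C^\infty$ level. Finally, your remark that each level is ``$L^1$-regular'' as a Banach-Lie group is not how the limit argument runs: the levels are not groups, and what is actually needed (and supplied by Lemma~\ref{mfd-case} together with the piecing argument in the proof of Proposition~\ref{technical}) is Carath\'{e}odory existence and uniqueness for the right-translation ODE $\dot y(t)=f_n(y(t),\gamma(t))$ on each Banach manifold $M_n$. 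So: right framework and right theorem to invoke, but the finite-order manifolds must be set up without $C^\infty$ or invertibility requirements, and the $C^k$-property of $\mu^n_{n+k}$ in the weighted setting must actually be established rather than cited.
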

\begin{example}\label{exa-bdd}
For any Banach space $(X,\|\cdot\|_X)$,
taking $\cW:=\{1\}$
we obtain the Fr\'{e}chet-Lie group
$\Diff_{\{1\}}(X)$
of all $C^\infty$-diffeomorphisms
$\gamma\colon X\to X$
such that $\gamma-\id_X$
and $\gamma^{-1}-\id_X$
are bounded functions
whose $k$th derivatives\linebreak
$X\to \cL^k(X,X)$
are bounded for all $k\in\N$
(where $\cL^k(X,X)$ is the Banach space of all continuous
$k$-linear maps $X^k\to X$).
By Proposition~\ref{main-2},
$\Diff_{\{1\}}(X)$ is $L^1$-regular.
\end{example}
\begin{example}\label{exa-schwartz}
Take $X=\R^d$
for some $d\in\N$.
If $\cW$ is the set of all polynomial
functions $\R^d\to\R$,
then $C^\infty_\cW(\R^d,\R^d)\cong \cS(\R^d)^d$
is the Schwartz space of $\R^d$-valued
rapidly decreasing smooth functions
and $\Diff_\cW(\R^d)$
the corresponding Lie group
of rapidly decreasing diffeomorphisms,
denoted $\Diff_\cS(\R^d)$ in~\cite{Wal}.\footnote{The Lie group structure on
$\Diff_\cS(\R)$
was already established in~\cite{Mic}. 
Using convenient differential calculus~\cite{KaM},
$\Diff_\cS(\R^d)$
and $\Diff_{\{1\}}(\R^d)$ were
discussed in~\cite{MaM}.
Compare also~\cite{Gol}.}
If $\|\cdot\|_2$ is the euclidean
norm on~$\R^d$
and $\cV$ the countable set of
polynomials
\[
\R^d\to\R,\quad x\mto (1+\|x\|_2^2)^m
\]
for $m\in\N\cup\{0\}$,
then $\Diff_\cW(\R^d)=\Diff_\cV(\R^d)$.
Thus $\Diff_\cS(\R^d)=\Diff_\cV(\R^d)$
is $L^1$-regular, by Proposition~\ref{main-2}.
\end{example}
If $G$ is a Banach-Lie group
and $M$ a compact smooth manifold
(possibly with boundary),
then the Lie group
$C^\infty(M,G)$
is $L^1$-regular.
We could prove this using Theorem~\ref{main-thm},
but the result is known
(see \cite[Remark~7.12]{MEA})
and follows more easily from
the fact that $C^\infty(M,G)=\bigcap_{n\in\N}C^n(M,G)$
is an intersection of Banach-Lie groups
which admits a so-called projective limit chart,
a situation covered by \cite[Proposition 7.8]{MEA}.
The following related examples are new.\\[2.3mm]
Let $G$ be a Banach-Lie group with Lie algebra
$\cg$ and exponential function $\exp_G\colon \cg\to G$.
If $(X,\|\cdot\|_X)$ is
a Banach space, $U\sub X$ an open subset,
$\ell\in \N\cup\{0,\infty\}$ and
$\cW$ a set of functions
$w\colon U\to \R\cup\{\infty\}$
such that $1\in \cW$,
then the subgroup
\[
FC^\ell_\cW(U,G)
\]
of $G^U$ generated by $\{\exp_G\circ\, \gamma\colon \gamma\in FC^\ell_\cW(U,\cg)\}$
admits a unique Lie group
structure modeled on $FC^\ell_\cW(U,\cg)$
making the map
\[
FC^\ell_\cW(U,\cg)\to FC^\ell_\cW(U,G),\quad
\gamma\mto\exp_G\circ\, \gamma
\]
a local $C^\infty$-diffeomorphism at~$0$
(cf.\ \cite[Definition 6.1.8]{Wal}).
The Lie groups obtained are $C^\infty$-regular~\cite[Proposition 6.1.14]{Wal}.
Proposition~7.8 in \cite{MEA} implies:
\begin{prop}\label{weight-map-reg}
If $\cW$ is countable,
then $FC^\ell_\cW(U,G)$
is $L^1$-regular.
\end{prop}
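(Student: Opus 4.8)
\medskip
\noindent\emph{Proof idea.}
The plan is to realize $FC^\ell_\cW(U,G)$ as an intersection of Banach--Lie groups which admits a projective limit chart, and then to invoke \cite[Proposition~7.8]{MEA} (the same device used above for $C^\infty(M,G)=\bigcap_{n}C^n(M,G)$). Since $1\in\cW$, I would enumerate $\cW=\{w_n\colon n\in\N\}$ with $w_1=1$ and, for $n\in\N$, set $\cW_n:=\{w_1,\ldots,w_n\}$ and $\ell_n:=\min(n,\ell)$. Because $\cW_n$ is finite and $\ell_n<\infty$, the space $E_n:=FC^{\ell_n}_{\cW_n}(U,\cg)$ is described by the finitely many seminorms $\gamma\mapsto\sup_{x\in U}|w_i(x)|\,\|\gamma^{(j)}(x)\|$ (for $i\le n$ and $0\le j\le\ell_n$) and is complete, hence a Banach space; by \cite[Definition~6.1.8]{Wal}, the subgroup $G_n:=FC^{\ell_n}_{\cW_n}(U,G)$ of $G^U$ generated by $\{\exp_G\circ\,\gamma\colon\gamma\in E_n\}$ is a Banach--Lie group modeled on $E_n$, with $E_n\to G_n$, $\gamma\mapsto\exp_G\circ\,\gamma$ a local $C^\infty$-diffeomorphism at~$0$.

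Next I would observe that, since $\cW_n\sub\cW_{n+1}$ and $\ell_n\le\ell_{n+1}$, one has $E_{n+1}\sub E_n$ and $G_{n+1}\sub G_n$; moreover $FC^\ell_\cW(U,\cg)=\bigcap_{n\in\N}E_n$ carries the projective limit topology of the Banach spaces $E_n$ (this is how its Fr\'echet structure is defined, $\cW$ being countable; see \cite{Wal}), whence $FC^\ell_\cW(U,G)=\bigcap_{n\in\N}G_n$ as a group, endowed with the projective limit smooth structure. It then remains to exhibit a projective limit chart: choosing an open $0$-neighbourhood $V\sub\cg$ on which $\exp_G$ restricts to a $C^\infty$-diffeomorphism and $r>0$ with $\{v\in\cg\colon\|v\|<r\}\sub V$, the map $\gamma\mapsto\exp_G\circ\,\gamma$ on $\Omega_n:=\{\gamma\in E_n\colon\sup_{x\in U}\|\gamma(x)\|<r\}$ is a chart of $G_n$ at $e$ (here one uses $w_1=1$, so that $\Omega_n$ is open in $E_n$), these charts are compatible across the levels~$n$, and, by injectivity of $\exp_G$ on $V$, the corresponding chart $\gamma\mapsto\exp_G\circ\,\gamma$ of $FC^\ell_\cW(U,G)$ has domain $\bigcap_n\Omega_n$ and image $\bigcap_n(\exp_G\circ\,\Omega_n)$. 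Translating this chart by the smooth right multiplications of $FC^\ell_\cW(U,G)$ yields an atlas of projective limit charts, so $FC^\ell_\cW(U,G)$ is the projective limit of the Banach--Lie groups $G_n$ as a Fr\'echet--Lie group, equipped with a projective limit chart.

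At this point \cite[Proposition~7.8]{MEA} applies: an intersection of Banach--Lie groups (each of which is $L^1$-regular) admitting a projective limit chart is $L^1$-regular. Hence $FC^\ell_\cW(U,G)$ is $L^1$-regular, and in particular the conclusion holds for $\ell=\infty$ (the case subsuming the weighted diffeomorphism groups and the Schwartz-type examples above is Proposition~\ref{main-2}, which is proved by a separate route through Proposition~\ref{technical}, but $FC^\ell_\cW(U,G)$ for a Banach--Lie target $G$ is handled most cleanly here).

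The main obstacle is the verification of the projective limit chart: one must check that the sets $\exp_G\circ\,\Omega_n$ are open in $G_n$ with the stated intersection property, that $FC^\ell_\cW(U,G)=\bigcap_n G_n$ holds as abstract groups (which relies on the intrinsic description of these mapping groups), and that the chart changes and group operations are compatible across the levels. This amounts to unwinding the construction in \cite[\S6.1]{Wal} together with the local behaviour of $\exp_G$; crucially, no analysis of Carath\'eodory differential equations is needed, since the passage from ``intersection of Banach--Lie groups with a projective limit chart'' to $L^1$-regularity is precisely the content of \cite[Proposition~7.8]{MEA}.
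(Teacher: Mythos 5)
Your proposal follows essentially the same route as the paper: the paper obtains Proposition~\ref{weight-map-reg} directly from \cite[Proposition~7.8]{MEA}, i.e.\ by viewing $FC^\ell_\cW(U,G)$ as an intersection of Banach--Lie groups admitting a projective limit chart, which is exactly what you set up with the levels $FC^{\ell_n}_{\cW_n}(U,G)$ (finite weight sets, truncated order of differentiability) and the charts coming from $\exp_G$. The verifications you flag as remaining work (openness of the exponential chart images, the identification $FC^\ell_\cW(U,G)=\bigcap_{n\in\N}FC^{\ell_n}_{\cW_n}(U,G)$ together with compatibility across levels) are precisely the details the paper leaves implicit in its appeal to \cite{Wal} and \cite[Proposition~7.8]{MEA}, so your sketch matches the intended argument.
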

\begin{example}\label{exa-again}
Using $\cW$ and $\cV$ as in Example~\ref{exa-schwartz},
we see that the Lie group
\[
\cS(\R^d,G):=FC^\infty_\cW(\R^d,G)=FC^\infty_\cV(\R^d,G)
\]
of rapidly decreasing $G$-valued mappings on $\R^d$
is $L^1$-regular, for each Banach-Lie group~$G$.
\end{example}
More general types of Lie groups of rapidly decreasing mappings
were considered in \cite{BCR}
and \cite[\S\,6.2]{Wal}. The following observation
uses notation as in~\cite{Wal}.
\begin{prop}\label{BCR-prop}
For each $d\in\N$, countable
non-empty set $\cW$ of BCR-weights
$w\colon \R^d\to[1,\infty]$
in the sense of {\rm\cite[Definition 6.2.18]{Wal}}
and each Banach-Lie group~$G$,
the Lie group $\cS(\R^d,G;\cW)$
of $\cW$-rapidly decreasing
$G$-valued smooth mappings on~$\R^d$
is $L^1$-regular.
\end{prop}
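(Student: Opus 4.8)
The plan is to proceed exactly as for Proposition~\ref{weight-map-reg} and for the Lie group $C^\infty(M,G)$ discussed above: exhibit $\cS(\R^d,G;\cW)$ as the intersection of a descending sequence of Banach-Lie groups admitting a projective limit chart in the sense of \cite{MEA}, and then invoke \cite[Proposition~7.8]{MEA}.

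First I would fix an enumeration $\cW=\{w_1,w_2,\ldots\}$ (with repetitions allowed, in case $\cW$ is finite) and, for each $n\in\N$, introduce a Banach space $E_n$ of $\cg$-valued $FC^n$-maps on $\R^d$ by retaining only those weighted seminorms from the definition of $\cS(\R^d,\cg;\cW)$ (see \cite[\S\,6.2]{Wal}) which involve the weights $w_1,\ldots,w_n$ and derivatives up to order $n$, equipped with the maximum of these finitely many seminorms. By construction the inclusion maps $E_{n+1}\hookrightarrow E_n$ are continuous with dense image and $\cS(\R^d,\cg;\cW)=\bigcap_{n\in\N}E_n$, carrying the projective limit locally convex topology.

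Next I would show, using the stability properties of BCR-weighted function spaces established in \cite[\S\,6.2]{Wal} (closure under products with bounded functions, under composition with smooth maps having bounded derivatives, and under the operations entering the $\exp_G\circ(\cdot)$ chart changes), that for each $n\in\N$ the subgroup $G_n$ of $G^{\R^d}$ generated by $\{\exp_G\circ\gamma\colon\gamma\in E_n\}$ admits a Banach-Lie group structure modeled on $E_n$ for which $E_n\to G_n$, $\gamma\mapsto\exp_G\circ\gamma$ is a local $C^\infty$-diffeomorphism at~$0$; this mirrors \cite[Proposition 6.1.14]{Wal} with the Banach space $E_n$ in place of the Fr\'{e}chet modeling space. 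These groups form a descending chain $G_1\supseteq G_2\supseteq\cdots$ with $\bigcap_{n\in\N}G_n=\cS(\R^d,G;\cW)$, and — since the chart $\gamma\mapsto\exp_G\circ\gamma$ is the same at every level — this realizes $\cS(\R^d,G;\cW)$ as an intersection of Banach-Lie groups admitting a projective limit chart. As each $G_n$ is a Banach-Lie group (hence $L^1$-regular) and $\cS(\R^d,G;\cW)$ is $C^\infty$-regular by \cite[\S\,6.2]{Wal}, \cite[Proposition~7.8]{MEA} applies and yields $L^1$-regularity of $\cS(\R^d,G;\cW)$.

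The main obstacle is the middle step: verifying that the truncated groups $G_n$ really are Banach-Lie groups modeled on $E_n$, which amounts to re-running the relevant portion of Walter's construction with only finitely many weights and finite differentiability order. This rests on the precise form of the BCR-weight axioms of \cite[Definition 6.2.18]{Wal} (comparability and submultiplicativity of the $w_i$ and their behaviour under differentiation), ensuring that group multiplication and the transition maps between $\exp_G\circ(\cdot)$-charts extend to $FC^\infty$-maps valued in $E_n$. Once this is in place, compatibility of the charts across all levels — and hence the projective limit chart hypothesis of \cite[Proposition~7.8]{MEA} — follows from the way the spaces $E_n$ were defined.
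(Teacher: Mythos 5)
Your proposal follows essentially the same route the paper intends for this proposition: realize $\cS(\R^d,G;\cW)$ as an intersection of Banach-Lie groups obtained from finitely many weights and finite differentiability order, with the common chart $\gamma\mto\exp_G\circ\,\gamma$ serving as a projective limit chart, and then invoke \cite[Proposition 7.8]{MEA}, exactly as the paper does for $FC^\ell_\cW(U,G)$ and for $C^\infty(M,G)$. One small remark: your claim that the inclusions $E_{n+1}\to E_n$ have dense image is neither needed for this argument nor true in general for weighted sup-norm spaces (so it should be dropped), and the truncated Banach-Lie groups are in essence instances of Walter's construction for finite order and a finite weight set (all BCR-weights being $\geq 1$, adjoining the weight $1$ is harmless), so the ``re-running'' you describe requires little genuinely new verification.
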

We mention that $C^k$-regularity
has been
discussed for
weighted mapping groups with values in
a $C^k$-regular Lie group~$G$
which need not be a Banach-Lie group
for $k\in\N\cup\{0,\infty\}$,
under suitable hypotheses for the set $\cW$
of weights~\cite{Ni0}.\\[2.3mm]
%
Independently, related studies (without direct overlap)
were performed recently by
Pierron and Trouv\'{e}~\cite{PaT}. To explain them, consider a Banach
half-Lie group~$G$ in the sense of~\cite{BHM}.
Thus $G$ is a topological group,
endowed with a structure of smooth Banach manifold
making the right translation $R_g\colon G\to G$, $x\mto xg$
a smooth map for each $g\in G$. For $k\in \N$, let $G_k$
be the set of all elements $g\in G$
for which the left translations $x\mto gx$ and $x\mto g^{-1}x$
are $FC^k$. If $G$ admits a right-invariant local addition,
then also each $G_k$ is a Banach half-Lie group~\cite{BHM}
and the existence of evolution maps for $L^1$-vector fields
can be studied on the level of the manifolds~$G_k$
(cf.\ \cite{PaT}). For complementary results concerning regularity properties
of half-Lie groups, cf.\ also \cite{Pin}.
\section{Preliminaries and notation}
All vector spaces we consider are
vector spaces over the field~$\R$
of real numbers.
If $(E,\|\cdot\|_E)$
is a normed space,
write
$\wb{B}^E_r(x):=\{y\in E\colon \|y-x\|_E\leq r\}$
for the closed ball or radius $r\geq 0$ around $x\in E$.
If $(E,\|\cdot\|_E)$
and $(F,\|\cdot\|_F)$
are normed spaces,
we let $(\cL(E,F),\|\cdot\|_{\op})$
be the normed space of all continuous
linear maps $\alpha\colon E\to F$,
where $\|\cdot\|_{\infty}$
is the operator norm.
Likewise, $(\cL^k(E,F),\|\cdot\|_{\op})$
denotes the normed space of all continuous $k$-linear maps $\alpha\colon E^k\to F$
with
\[
\|\alpha\|_{\op}:=\sup\{\|\alpha(x_1,\ldots, x_k)\|_F\colon x_1,\ldots, x_k
\in
\wb{B}^E_1(0)\},
\]
if $k\in\N$.
We are working in Bastiani's setting of infinite-dimensional
calculus~\cite{Bas}, also known as Keller's $C^k_c$-theory~\cite{Kel}.
For
concepts and notation concerning
infinite-dimensional calculus and
manifolds modeled on locally convex spaces,
the reader is referred to \cite{GaN, Nee, Sme}
(cf.\ also \cite{BCR,Ham,Mil}).
Notation concerning vector-valued $\cL^p$-functions,
absolutely continuous functions,
and Carath\'{e}odory solutions to differential equations
is as in \cite{Nik,GaH, MEA}
(cf.\ also \cite{Sch}).
\section{The setting of Hermas and Bedida}
We consider Fr\'{e}chet--Lie groups in a setting introduced
by Hermas and Bedida~\cite{HaB}.
Slightly reformulated in an equivalent form,
the setting can be described as follows.
\begin{numba}\label{bcd}
Let $G_1$ be a group, $G_1\supseteq G_2\supseteq\cdots$
be subgroups, and $G_\infty:=\bigcap_{n\in \N} G_n$.
For each $n\in\N$,
let a smooth manifold structure
modeled on Banach spaces be given on~$G_n$.
Let $\{\phi_j\colon j\in J_1\}$
be a $C^\infty$-atlas for~$G_1$,
consisting of charts $\phi_j\colon U_j\to V_j$
from open non-empty subsets $U_j\sub G_1$
onto open subsets~$V_j$ of Banach spaces~$E_{1,j}$.
For all $(n,j)\in \N\times J_1$ with $n\geq 2$
such that $G_n\cap U_j\not=\emptyset$,
let $E_{n,j}$ be a Banach space.
For integers $n\geq 2$, abbreviate
\[
J_n:=\{j\in J_1\colon G_n\cap U_j\not=\emptyset\};
\]
let
\[
J:=\{j\in J_1\colon G_\infty\cap U_j\not=\emptyset\}.
\]
The following assumptions (a)--(d) are made:
\begin{itemize}
\item[(a)]
For all $n,k\in\N$,
the map
\[
\mu_{n+k}^n\colon G_{n+k}\times G_n\to G_n,\quad (g,h)\mto gh
\]
is~$C^k$ (where $gh$ is the product of $g$ and $h$ in the group~$G_n$).
\item[(b)]
$E_{n+1,j}$ is vector subspace of
$E_{n,j}$ for all $n\in\N$ and $j\in J_{n+1}$.
Moreover, the inclusion map $E_{n+1,j}\to E_{n,j}$
(which is linear) is continuous.
\item[(c)]
For all $n\in\N$ and $j\in J_n$,
the intersection $G_n\cap U_j$
is open in~$G_n$,
the image $V_{n,j}:=\phi_j(G_n\cap U_j)$
is open in~$E_{n,j}$, and
\[
\phi_{n,j}:=\phi_j|_{G_n\cap \, U_j}\colon G_n\cap U_j\to V_{n,j}
\]
is a chart of~$G_n$.
\item[(d)]
For all $j\in J$,
the set $W_j:=\phi_j(G_\infty\cap U_j)$
is open in $E_j:=\bigcap_{n\in\N}E_{n,j}$,
endowed with the projective limit topology.
\end{itemize}
\end{numba}
Give $G_\infty$ the initial topology with
respect to the inclusion maps $\lambda_n\colon G_\infty\to G_n$
for $n\in\N$, making it the projective limit ${\displaystyle\lim_{{\textstyle\leftarrow}}}\,G_n$\vspace{-.7mm}
as a topological space.
Hermas and Bedida~\cite{HaB} showed
that, for $j\in J$, the maps
\[
\psi_j\colon G_\infty\cap U_j\to W_j,\quad
x\mto\phi_j(x)
\]
form a $C^\infty$-atlas for~$G_\infty$
making it a Fr\'{e}chet-Lie group
which is $C^\infty$-regular.
\begin{rem}\label{more-details}
\begin{itemize}
\item[(a)]
If $j\in J$ and
$Q$
is an open subset of~$G_1$,
then $U_j\cap Q$ is open in $U_j$,
whence $\phi_j(U_j\cap Q)$ is open in $V_j$
and hence in $E_{1,j}$.
As a consequence,
\[
\phi_j(G_\infty\cap U_j\cap Q)
=\phi_j(G_\infty\cap U_j)\cap \phi_j(U_j\cap Q)
\]
is open in $E_j$.
Thus (d) is equivalent
to the
stronger-looking condition~(iii)
of \cite[p.\,631]{HaB}.
\item[(b)]
While we only require
that the $\mu_{n+k}^n$
be Keller $C^k_c$-maps,
they are assumed $k$ times
continuously Fr\'{e}chet
differentiable maps in~\cite{HaB}
(i.e., $FC^k$-maps).
This is inessential:
If we define $H_n:=G_{2n-1}$
in~\ref{bcd}
for $n\in\N$,
then $G_\infty=\bigcap_{n\in\N}H_n$.
Moreover, the topology on $E_j$
and the maps $\psi_j:=\phi_j|_{G_\infty\cap U_j}\colon G_\infty\cap U_j\to W_j$
are unchanged if we replace $G_n$ with $H_n$, and
also the projective limit topologies
on $\bigcap_{n\in\N}G_n$
and $\bigcap_{n\in\N}H_n=\bigcap_{n\in\N}G_{2n-1}$
coincide.
As the product map
$H_{n+k}\times H_n\to H_n$
equals $\mu_{2n+2k-1}^{2n-1}$,
it is $C^{2k}$, hence $C^{k+1}$
and hence $k$ times continuously
Fr\'{e}chet differentiable
(see, e.g., \cite{Kel,GaN,Wal}).
\end{itemize}
\end{rem}
\begin{numba}
Condition~(d)
is automatic if
\[
\phi_j(G_n\cap U_j)=E_{n,j}\cap V_j
\]
for all $j\in J$
(this variant is called condition~(iv) in \cite[p.\,631]{HaB}).
\end{numba}
\section{Lemmas concerning Carath\'{e}odory solutions}
It is useful to formulate some quantitative results concerning
Carath\'{e}odory solutions to ordinary differential equations (ODE).
We recall a basic estimate of calculus.

\begin{la}\label{lip-para}
Let $(E,\|\cdot\|_E)$,
$(F,\|\cdot\|_F)$,
and $(Z,\|\cdot\|_Z)$
be normed spaces, $U\sub E$
be an open subset, and $f\colon U\times Z\to F$
be a $C^2$-map such that $f(y,\cdot)\colon Z\to F$, $z\mto f(y,z)$
is linear for each $y\in U$.
Let $y_0\in U$.
Then there exists an open $y_0$-neighbourhood
$U_0\sub U$ and $L\in [0,\infty[$
such that
\[
(\forall y_1,y_2\in U_0)(\forall z\in Z)\;\;
\|f(y_2,z)-f(y_1,z)\|_F\,\leq\, L\hspace*{.2mm}\|y_2-y_1\|_E\hspace*{.2mm}\|z\|_Z.
\]
\end{la}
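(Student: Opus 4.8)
The plan is to reduce the desired estimate to a local bilinear bound for the partial differential of $f$ in its first argument, and then to integrate along line segments. Since $f$ is $C^2$ it is in particular $C^1$, so its differential $df\colon (U\times Z)\times (E\times Z)\to F$ is continuous; restricting the direction to vectors of the form $(w,0)$ with $w\in E$, we obtain a continuous map
\[
h\colon U\times Z\times E\to F,\qquad h(y,z,w):=df\bigl((y,z),(w,0)\bigr),
\]
which is the partial differential of $f$ with respect to its first variable. First I would check that $h$ is bilinear in the pair $(z,w)$: linearity in $w$ is immediate, since $h(y,z,\cdot)$ is the restriction of a continuous linear map; linearity in $z$ follows from the hypothesis that $f(y,\cdot)$ is linear, because for fixed $y$ and $w$ the expression $h(y,z,w)=\lim_{s\to 0}s^{-1}\bigl(f(y+sw,z)-f(y,z)\bigr)$ is a pointwise limit of maps linear in~$z$.

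Next I would extract a uniform bilinear estimate from the continuity of $h$ at $(y_0,0,0)$. Choose $r>0$ with $U_0:=\{y\in E\colon\|y-y_0\|_E<r\}\sub U$ (possible since $U$ is open) and $\delta>0$ such that $\|h(y,z,w)\|_F\le 1$ whenever $y\in U_0$, $\|z\|_Z\le\delta$ and $\|w\|_E\le\delta$. For $y\in U_0$ and nonzero $z\in Z$, $w\in E$, applying this to $\delta z/\|z\|_Z$ and $\delta w/\|w\|_E$ and using bilinearity gives $\|h(y,z,w)\|_F\le\delta^{-2}\|z\|_Z\|w\|_E$; the inequality is trivial when $z=0$ or $w=0$. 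Thus, with $L:=\delta^{-2}$, we have $\|h(y,z,w)\|_F\le L\|z\|_Z\|w\|_E$ for all $y\in U_0$, $z\in Z$ and $w\in E$.

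Finally I would integrate. As $U_0$ is convex, for $y_1,y_2\in U_0$ and $z\in Z$ the curve $c(t):=f\bigl(y_1+t(y_2-y_1),z\bigr)$ is $C^1$ on $[0,1]$ with $c'(t)=h\bigl(y_1+t(y_2-y_1),z,y_2-y_1\bigr)$ by the chain rule, so the fundamental theorem of calculus gives
\[
f(y_2,z)-f(y_1,z)=c(1)-c(0)=\int_0^1 h\bigl(y_1+t(y_2-y_1),z,y_2-y_1\bigr)\,dt.
\]
Bounding the integrand by the bilinear estimate yields $\|f(y_2,z)-f(y_1,z)\|_F\le L\|y_2-y_1\|_E\|z\|_Z$, which is the claim.

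I expect the only slightly delicate points to be the verification that the partial differential $h$ is linear in the $z$-slot, so that the rescaling argument applies, and the bookkeeping involved in upgrading continuity of $h$ at a single point to a uniform bilinear bound on a whole neighbourhood; the remainder is a routine appeal to the fundamental theorem of calculus for $C^1$-curves. Note that the full $C^2$-hypothesis is not actually needed for this argument --- $C^1$ already suffices --- but it is of course harmless.
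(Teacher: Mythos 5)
Your argument is correct, but it takes a genuinely different route from the paper. The paper's proof is a two-line reduction: it invokes \cite[Proposition~1]{ASP} to conclude that $f^\vee\colon U\to(\cL(Z,F),\|\cdot\|_{\op})$, $y\mapsto f(y,\cdot)$ is $C^1$ (this is where the $C^2$-hypothesis is consumed), and then uses that $C^1$-maps between normed spaces are locally Lipschitz, which is exactly the asserted estimate. You instead work directly with the partial differential $h(y,z,w)=df\bigl((y,z),(w,0)\bigr)$, verify its bilinearity in $(z,w)$ (the limit argument for linearity in $z$ is the right observation), upgrade continuity at $(y_0,0,0)$ to a uniform bilinear bound on a ball by rescaling, and finish with the mean value estimate along segments. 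This is more elementary and self-contained, and it correctly shows that Bastiani-$C^1$ already suffices --- a genuine (if mild) sharpening relative to the paper's hypothesis, which is tailored to the cited exponential-law-type result; what the paper's route buys is brevity and a statement ($f^\vee$ of class $C^1$ into the operator norm) that is reused elsewhere in the same form. One small caveat: since $F$ is only assumed to be a normed space, the Riemann integral $\int_0^1 c'(t)\,dt$ need not exist in $F$ itself, so you should either perform the integration in the completion of $F$, compose with continuous linear functionals and apply Hahn--Banach, or simply quote the mean value inequality $\|c(1)-c(0)\|_F\le\sup_{t\in[0,1]}\|c'(t)\|_F$ for $C^1$-curves, which yields the same bound without any completeness assumption.
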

\begin{proof}
The map $f^\vee\colon U\to\cL(Z,F)$, $y\mto f(y,\cdot)$
is $C^1$ to $(\cL(Z,F),\|\cdot\|_{\op})$, by \cite[Proposition~1]{ASP}.
As a consequence, $f^\vee$ is locally
Lipschitz (cf.\ \cite[Exercise~1.5.4]{GaN}).
Thus, there exists an open $y_0$-neighbourhood
$U_0\sub U$ and $L\geq 0$
such that $\|f^\vee(y_2)-f^\vee(y_1)\|_{\op}\leq L\,\|y_2-y_1\|_E$
for all $y_1,y_2\in U_0$.
\end{proof}
The following quantitative variant of
the Carath\'{e}odory Existence Theorem
is useful
for us (cf.\ \cite[30.9]{Sch}
for other versions).
\begin{la}\label{carath-ex}
Let $(E,\|\cdot\|_E)$
and $(Z,\|\cdot\|_Z)$
be Banach spaces, $y_0\in E$,
$R>0$ and
$f\colon \wb{B}^E_R(y_0)\times Z\to E$
be a continuous map such that $f(y,\cdot)\colon Z\to E$
is linear for each $y\in \wb{B}^E_R(y_0)$
and there exists $L\in [0,\infty[$
such that
\begin{equation}\label{integral-lip}
\|f(y_2,z)-f(y_1,z)\|_E\,\leq\, L\hspace*{.1mm}\|y_2-y_1\|_E\|z\|_Z
\end{equation}
for all $y_1,y_2\in \wb{B}^E_R(y_0)$ and $z\in Z$.
Then the following holds
for all real numbers $a<b$:
\begin{itemize}
\item[\rm(a)]
If $\gamma\colon [a,b]\to Z$ is an $\cL^1$-map,
then the ODE $y'(t)=f(y(t),\gamma(t))$
satisfies
local
uniqueness of Carath\'{e}odory
solutions.
\item[\rm(b)]
If, moreover,
$L\hspace*{.2mm}\|\gamma\|_{\cL^1}<1$
and
\[
\sup\left\{\|f(y,\cdot)\|_{\op}\colon y\in \wb{B}^E_R(y_0)\right\}
 \|\gamma\|_{\cL^1}
\;\leq\; R,
\]
then, for each $t_0\in [a,b]$, the initial value problem
\[
y'(t)=f(y(t),\gamma(t)),\qquad
y(t_0)=y_0
\]
has a Carath\'{e}odory
solution $\eta\colon [a,b]\to \wb{B}^E_R(y_0)$
on all of $[a,b]$.
\end{itemize}
\end{la}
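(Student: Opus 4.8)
The plan is to treat parts (a) and (b) by the classical Picard--Lindel\"of/Carath\'eodory machinery, carried out on the space of continuous curves with the sup-norm, exploiting that the Lipschitz estimate~(\ref{integral-lip}) is \emph{integrable against} $\gamma$ rather than uniform in time. Throughout I would work with the integrated form: a Carath\'eodory solution of $y'(t)=f(y(t),\gamma(t))$ with $y(t_0)=y_0$ on a subinterval $I\ni t_0$ is precisely a continuous $\eta\colon I\to\wb{B}^E_R(y_0)$ with
\[
\eta(t)=y_0+\int_{t_0}^t f(\eta(s),\gamma(s))\,ds\qquad(t\in I),
\]
the integrand being an $\cL^1$-map since $s\mto f(\eta(s),\gamma(s))$ is continuous in the first slot, measurable as a composition, and dominated by $\sup_{y}\|f(y,\cdot)\|_{\op}\,\|\gamma(s)\|_Z\in\cL^1$.

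For part~(a), suppose $\eta_1,\eta_2$ are two Carath\'eodory solutions on a common subinterval $I\ni t_0$ of $[a,b]$ with the same value at $t_0$, both taking values in $\wb{B}^E_R(y_0)$. Subtracting the integral equations and applying~(\ref{integral-lip}) pointwise gives, for $t\ge t_0$ in $I$,
\[
\|\eta_2(t)-\eta_1(t)\|_E\;\le\;L\int_{t_0}^t \|\eta_2(s)-\eta_1(s)\|_E\,\|\gamma(s)\|_Z\,ds .
\]
Since $s\mto L\|\gamma(s)\|_Z$ is in $\cL^1$, Gronwall's inequality in its integral (Carath\'eodory) form forces $\eta_2=\eta_1$ on $\{t\ge t_0\}\cap I$, and symmetrically on $\{t\le t_0\}\cap I$; hence local uniqueness. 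I would cite the integral Gronwall lemma from \cite{Nik} or \cite{GaH}; no smoothness of $f$ beyond continuity is needed here, only the estimate.

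For part~(b), I would set $M:=\sup\{\|f(y,\cdot)\|_{\op}\colon y\in\wb{B}^E_R(y_0)\}$, note $M<\infty$ by continuity of $f$ and compactness of the norm argument (or, if one is cautious, by the hypothesis itself, since the displayed inequality is vacuous otherwise), and define the fixed-point operator $\Phi$ on the complete metric space $X$ of continuous curves $\eta\colon[a,b]\to\wb{B}^E_R(y_0)$ (sup-metric) by $\Phi(\eta)(t):=y_0+\int_{t_0}^t f(\eta(s),\gamma(s))\,ds$. The estimate $\|\Phi(\eta)(t)-y_0\|_E\le M\,\|\gamma\|_{\cL^1}\le R$ shows $\Phi$ maps $X$ into $X$, and~(\ref{integral-lip}) gives $\|\Phi(\eta_2)(t)-\Phi(\eta_1)(t)\|_E\le L\|\gamma\|_{\cL^1}\,\|\eta_2-\eta_1\|_{\infty}$, so $\Phi$ is a contraction with constant $L\|\gamma\|_{\cL^1}<1$; Banach's fixed point theorem yields the desired Carath\'eodory solution $\eta\colon[a,b]\to\wb{B}^E_R(y_0)$. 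The one point requiring a little care --- and the main (though mild) obstacle --- is verifying that $\Phi$ genuinely lands in \emph{continuous} curves and that a fixed point of $\Phi$ is a Carath\'eodory solution in the precise sense used in \cite{Nik,MEA} (absolutely continuous, differentiable a.e., satisfying the ODE a.e.); this is the standard equivalence between the integral equation and the Carath\'eodory formulation, which I would invoke by reference rather than reprove. One could alternatively run the iteration on subintervals and patch, but the global contraction argument above is cleaner given the hypotheses.
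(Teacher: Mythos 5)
Your proof of part (b) is essentially the paper's own argument: the same fixed-point operator $\Phi(\eta)(t)=y_0+\int_{t_0}^t f(\eta(s),\gamma(s))\,ds$ on the complete metric space of continuous curves $[a,b]\to\wb{B}^E_R(y_0)$ with $\eta(t_0)=y_0$, the same invariance estimate via $\sup_y\|f(y,\cdot)\|_{\op}\,\|\gamma\|_{\cL^1}\le R$, and the same contraction constant $L\|\gamma\|_{\cL^1}<1$; the paper settles the integrability of $s\mto f(\eta(s),\gamma(s))$ and the equivalence of the integral equation with the Carath\'eodory formulation by citing \cite[Lemma~2.1]{MEA}, which you likewise delegate to the literature. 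In part (a) you take a genuinely (if mildly) different route: the paper simply quotes local uniqueness from \cite[30.9]{Sch} or \cite[Proposition~10.5]{MEA}, whereas you derive it directly from the integral form of Gronwall's inequality with the integrable kernel $L\|\gamma(s)\|_Z$; this is correct and self-contained, and in fact yields uniqueness on the whole common interval rather than merely locally, at the cost of invoking the Carath\'eodory version of Gronwall instead of the cited uniqueness results. One caveat: your parenthetical justification of $M:=\sup\{\|f(y,\cdot)\|_{\op}\colon y\in\wb{B}^E_R(y_0)\}<\infty$ by compactness is not available, since closed balls in an infinite-dimensional Banach space are not compact; but finiteness is immediate from (\ref{integral-lip}), because $\|f(y,\cdot)\|_{\op}\le\|f(y_0,\cdot)\|_{\op}+LR$ and $f(y_0,\cdot)$ is continuous linear, so this is a slip in the stated reason rather than a gap in the proof.
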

\begin{proof}
(a) In view of (\ref{integral-lip}),
local uniqueness is guaranteed by \cite[30.9]{Sch}
or \cite[Proposition~10.5]{MEA}.

(b) Write $y_0$ also for the constant function
$[a,b]\to E$, $t\mto y_0$.
Then
\[
X:=\{\eta\in C([a,b],E)\colon\mbox{$\eta(t_0)=y_0$ and $\|\eta-y_0\|_\infty\leq R$}\}
\]
is a non-empty, closed subset of
the Banach space $C([a,b],E)$. For each $\eta\in X$,
the function $[a,b]\to E$, $t\mto f(\eta(t),\gamma(t))$
is $\cL^1$ by \cite[Lemma~2.1]{MEA}, whence
\[
\Phi(\eta)\colon [a,b]\to E,\quad
t\mto y_0+\int_{t_0}^t f(\eta(s),\gamma(s))\, ds
\]
is an absolutely continuous function
with $\Phi(\eta)(t_0)=y_0$.
Then $\|\Phi(\eta)(t)-y_0\|_E\leq R$
for all $t\in [a,b]$, whence $\Phi(\eta)\in X$.
In fact,
\begin{eqnarray*}
\|\Phi(\eta)(t)-y_0\|_E
& \leq & \int_{t_0}^t \|f(\eta(s),\gamma(s))\|_E\,ds\\
&\leq & \sup\big\{\|f(y,\cdot)\|_{\op}\colon y\in\wb{B}^E_R(y_0)\big\}
\, \|\gamma\|_{\cL^1}
\leq R
\end{eqnarray*}
for $t\in [t_0,b]$ (and likewise
for $t\in [a,t_0]$).
For all $\eta,\zeta\in X$, we have
\[
\|\Phi(\eta)(t)-\Phi(\zeta)(t)\|_E
\leq\int_{t_0}^t
\|f(\eta(s),\gamma(s))-f(\zeta(s),\gamma(s))\|_E\, ds
\leq L\hspace*{.1mm}\|\gamma\|_{\cL^1} \hspace*{.1mm}
\|\eta-\zeta\|_\infty
\]
for all $t\in [t_0,b]$ (and likewise
for $t\in [a,t_0]$). Hence,
$\Phi\colon X\to X$ is a contraction
with Lipschitz constant $L\|\gamma\|_{\cL^1}<1$.
By Banach's Fixed Point Theorem,
$\Phi$ has a unique fixed point $\eta$.
Then $\eta=\Phi(\eta)$
is absolutely continuous;
by construction, $\eta$
solves the desired initial value problem.
\end{proof}
\begin{la}\label{mfd-case}
Let $(Z,\|\cdot\|_Z)$
be a Banach space, $M$ be a smooth manifold
modeled on a Banach space
and $f\colon M\times Z \to TM$
be a $C^2$-map such that $f(p,z)\in T_pM$
for all $p\in M$ and $z\in Z$,
and the map $f(p,\cdot)\colon Z\to T_pM$
is linear for each $p\in M$.
For all real numbers $a<b$,
the following then holds.
\begin{itemize}
\item[\rm(a)]
For each $\cL^1$-function $\gamma\colon [a,b]\to Z$,
the ODE $\dot{y}(t)=f(y(t),\gamma(t))$
satisfies local existence and local
uniqueness of Carath\'{e}odory solutions.
\item[\rm(b)]
For all $y_0\in M$
and $t_0\in [a,b]$,
there exists $\ve>0$
such that, for each $\cL^1$-function
$\gamma\colon [a,b]\to Z$ with $\|\gamma\|_{\cL^1}\leq \ve$,
the initial value problem
\[
\dot{y}(t)=f(y(t),\gamma(t)),\quad y(t_0)=y_0
\]
has a Carath\'{e}odory
solution $\eta\colon [a,b]\to M$
on all of~$[a,b]$.
\end{itemize}
\end{la}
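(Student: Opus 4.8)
The plan is to reduce both assertions to the Banach-space statements of Lemmas~\ref{lip-para} and~\ref{carath-ex} by working inside a single chart. Fix a point $p_0\in M$ (an arbitrary point for part~(a), and $p_0:=y_0$ for part~(b)), choose a chart $\phi\colon U\to V$ of $M$ around $p_0$ with $V$ open in the modeling Banach space $(E,\|\cdot\|_E)$, write $u_0:=\phi(p_0)$, and let $T_p\phi\colon T_pM\to E$ denote the (linear) derivative of $\phi$ at $p\in U$, identified with a map to $E$ via $T_{\phi(p)}V\cong E$. Define
\[
\tilde f\colon V\times Z\to E,\quad (u,z)\mapsto T_{\phi^{-1}(u)}\phi\big(f(\phi^{-1}(u),z)\big).
\]
Then $\tilde f$ is a $C^2$-map, being a composition of the $C^2$-map $f$ with the smooth maps $\phi^{-1}$ and $T\phi$, and $\tilde f(u,\cdot)\colon Z\to E$ is linear for each $u\in V$ because $f(p,\cdot)$ is linear and each $T_p\phi$ is a linear isomorphism. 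A curve taking values in $U$ is a Carath\'{e}odory solution of $\dot{y}(t)=f(y(t),\gamma(t))$ if and only if its composition with $\phi$ is a Carath\'{e}odory solution of $u'(t)=\tilde f(u(t),\gamma(t))$; hence it suffices to prove the corresponding statements for the chart ODE on a suitable closed ball in $V$ and then transport solutions along $\phi$ and $\phi^{-1}$.

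For part~(a), apply Lemma~\ref{lip-para} with $\tilde f$ in place of $f$ to obtain an open $u_0$-neighbourhood $U_0\sub V$ and $L\in[0,\infty[$ satisfying $\|\tilde f(u_2,z)-\tilde f(u_1,z)\|_E\le L\,\|u_2-u_1\|_E\,\|z\|_Z$ for all $u_1,u_2\in U_0$ and $z\in Z$. Since $\tilde f^\vee\colon V\to\cL(Z,E)$, $u\mapsto\tilde f(u,\cdot)$, is continuous (as in the proof of Lemma~\ref{lip-para}) and hence locally bounded, after shrinking we may assume $U_0=\wb{B}^E_R(u_0)$ for some $R>0$ and that $M_0:=\sup\{\|\tilde f(u,\cdot)\|_{\op}\colon u\in\wb{B}^E_R(u_0)\}<\infty$. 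Now Lemma~\ref{carath-ex}(a) gives local uniqueness of Carath\'{e}odory solutions for $u'=\tilde f(u,\gamma)$, hence for $\dot{y}=f(y,\gamma)$ near $p_0$; as $p_0$ was arbitrary, local uniqueness holds on $M$. For local existence near a given $t_0\in[a,b]$ with a given initial value $p_0$, note that $t\mapsto\int_a^t\|\gamma(s)\|_Z\,ds$ is continuous, so there is $\delta>0$ such that the interval $I:=[t_0-\delta,t_0+\delta]\cap[a,b]$ satisfies $L\,\|\gamma|_I\|_{\cL^1}<1$ and $M_0\,\|\gamma|_I\|_{\cL^1}\le R$; then Lemma~\ref{carath-ex}(b), applied on $I$ with base point $t_0$, yields a Carath\'{e}odory solution $I\to\wb{B}^E_R(u_0)$ of the chart ODE with value $u_0$ at $t_0$, and composing with $\phi^{-1}$ gives the desired local solution on $M$.

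For part~(b), fix $y_0\in M$ and $t_0\in[a,b]$, carry out the construction of part~(a) with $p_0:=y_0$, obtaining $R>0$, $L\ge 0$ and $M_0<\infty$ with the Lipschitz estimate valid on $\wb{B}^E_R(u_0)$, and put
\[
\ve:=\min\Big\{\tfrac{1}{L+1},\,\tfrac{R}{M_0+1}\Big\}>0,
\]
so that $L\ve<1$ and $M_0\ve\le R$. If $\gamma\colon[a,b]\to Z$ is an $\cL^1$-function with $\|\gamma\|_{\cL^1}\le\ve$, then Lemma~\ref{carath-ex}(b) applied on all of $[a,b]$ with base point $t_0$ provides a Carath\'{e}odory solution $\zeta\colon[a,b]\to\wb{B}^E_R(u_0)$ of $u'=\tilde f(u,\gamma)$ with $\zeta(t_0)=u_0$. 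Since $\zeta$ never leaves the chart domain, $\eta:=\phi^{-1}\circ\zeta\colon[a,b]\to M$ is a Carath\'{e}odory solution of $\dot{y}=f(y,\gamma)$ with $\eta(t_0)=y_0$, defined on all of $[a,b]$, as required.

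The steps are all routine once the translation to a chart is in place; the only points that need care are the choice of the radius $R$ small enough that $\tilde f^\vee$ is bounded and Lipschitz on $\wb{B}^E_R(u_0)$, and, in part~(a), the passage to a short time interval $I$ so that the smallness hypotheses of Lemma~\ref{carath-ex}(b) are met for the given $\gamma$ (which itself need not be small in $\cL^1$). In part~(b) it is precisely the smallness of $\|\gamma\|_{\cL^1}$ that confines the solution to a single chart, which is what makes the conclusion on the whole interval $[a,b]$ available.
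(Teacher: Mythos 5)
Your proposal is correct and follows essentially the same route as the paper's proof: transfer the ODE to a chart, use Lemma~\ref{lip-para} for the Lipschitz estimate and continuity of $u\mapsto \tilde f(u,\cdot)$ to get boundedness on a closed ball, then invoke Lemma~\ref{carath-ex}(a) for local uniqueness and Lemma~\ref{carath-ex}(b) on a short subinterval (resp.\ on all of $[a,b]$ when $\|\gamma\|_{\cL^1}\leq\ve$) for existence, pulling solutions back via $\phi^{-1}$. The only cosmetic difference is that the paper fixes $\ve$ first and then chooses the subinterval $[\alpha,\beta]$ with $\|\gamma|_{[\alpha,\beta]}\|_{\cL^1}\leq\ve$, whereas you shrink the interval directly; the content is identical.
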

\begin{proof}
(a)
Given $y_0\in M$,
there exists a chart $\phi\colon U\to V\sub E$
with $y_0\in U$,
for some Banach space
$(E,\|\cdot\|_E)$.
Then
\[
g\colon V\times Z\to E,\quad (x,z)\mto d\phi (f(\phi^{-1}(x),z))
\]
is a $C^2$-map and $g(x,\cdot)\colon Z\to E$
is linear for each $x\in V$.
Let $x_0:=\phi(y_0)$.
By Lemma~\ref{lip-para},
after shrinking the open $x_0$-neighbourhood
$V$ (and $U=\phi^{-1}(V)$),
we may assume that, for some $L\in[0,\infty[$,
\[
\|g(x_2,z)-g(x_1,z)\|_E\leq L\hspace*{.1mm}\|x_2-x_1\|_E\|z\|_Z
\]
for all $x_1,x_2\in V$ and $z\in Z$.
Hence, if
$a<b$ are real numbers
and $\gamma\colon [a,b]\to Z$
is an $\cL^1$-function,
then
\[
x'(t)=g(x(t),\gamma(t))
\]
satisfies local uniqueness
of Carath\'{e}odory solutions, by Lemma~\ref{carath-ex}\,(a).
As a consequence, $\dot{y}(t)=(f|_{U\times Z})(y(t),\gamma(t))$
satisfies local uniqueness of Carath\'{e}odory
solutions
(the vector fields
$y\mto f(y,\gamma(t))$ on~$U$
and $x\mto (x,g(x,\gamma(t)))$ on~$V$
being $\phi$-related for all $t\in [a,b]$).
As $y_0$ was arbitrary and the corresponding open sets~$U$
cover~$M$,
the ODE $\dot{y}(t)=f(y(t),\gamma(t))$
satisfies local uniqueness as well.
For some $R>0$, we have
$\wb{B}^E_R(x_0)\sub V$.
The mapping
\[
g^\vee\colon V\to (\cL(Z,E),\|\cdot\|_{\op}),\quad x\mto g(x,\cdot)
\]
being continuous by \cite[Proposition~1]{ASP},
after shrinking~$R$ we may assume that
\[
S:=\sup\{\|g(x,\cdot)\|_{\op}\colon x\in \wb{B}^E_R(x_0)\}<\infty.
\]
Choose $\ve>0$ so small that
\[
L\ve<1\quad\mbox{and}\quad S\ve \leq R.
\]
For each
$t_0\in [a,b]$,
there exist $\alpha\in [a,t_0]$
and $\beta\in [t_0,b]$
such that $[\alpha,\beta]$
is a neighbourhood of~$t_0$ in $[a,b]$
and $\|\gamma|_{[\alpha,\beta]}\|_{\cL^1}\leq\ve$.
By Lemma~\ref{carath-ex}\,(b), the initial value problem
\[
x'(t)=g(x(t),\gamma(t)),\quad x(t_0)=x_0
\]
has a Carath\'{e}odory solution $\eta\colon [\alpha,\beta]\to \wb{B}^E_R(x_0)$.
Then $\phi^{-1}\circ\eta$ solves $\dot{y}(t)=f(y(t),\gamma(t))$,
$y(t_0)=y_0$.

(b) Let $\ve$ be as in the proof of~(a).
If $\|\gamma\|_{\cL^1}\leq\ve$,
we can take $\alpha:=a$ and $\beta:=b$
in the proof of~(a).
\end{proof}
\section{Further generalizations}
To enable the study of further examples like certain
weighted diffeomorphism groups,
we now introduce a more general setting than the one
of Hermas and Bedida.
We consider the following situation.
\begin{numba}\label{new-sett}
Let $G$ be a Fr\'{e}chet-Lie group
with group multiplication
$\mu_G\colon G\times G\to G$
and neutral element~$e$.
Assume that~$G$
is an open submanifold
of a smooth manifold~$M_\infty$
such that
\[
M_\infty=\bigcap_{n\in\N}M_n
\]
for a sequence
$M_1\supseteq M_2\supseteq\cdots$
of smooth Banach manifolds,
with smooth inclusion maps
$\lambda_m^n\colon M_m\to M_n$
for all $m\geq n$ in~$\N$.
We assume that a map
\[
\mu_2^1\colon M_2\times M_1\to M_1,\quad (x,y)\mto xy
\]
is given with the following properties:
\begin{itemize}
\item[(a1)]
$\mu_2^1(x,y)=\mu_G(x,y)$
for all $x,y\in G$;
\item[(a2)]
$\mu_2^1(M_{n+k}\times M_n)\sub M_n$
for all $n,k\in \N$ and the restriction
\[
\mu_{n+k}^n\colon M_{n+k}\times M_n\to M_n,\quad (x,y)\mto xy
\]
is~$C^k$;
\item[(a3)]
$ex=x$ for all $x\in M_1$;
and
\item[(a4)]
$(xy)z=x(yz)$ holds for all $\ell>m >n$ in~$\N$ and
$(x,y,z)\in M_\ell\times M_m\times M_n$.
\end{itemize}
Let $\{\phi_j\colon j\in J_1\}$
be a $C^\infty$-atlas for~$M_1$,
consisting of charts $\phi_j\colon U_j\to V_j$
from open non-empty subsets $U_j\sub M_1$
onto open subsets~$V_j$ of Banach spaces~$E_{1,j}$.
For all $(n,j)\in \N\times J_1$
with $n\geq 2$ and $M_n\cap U_j\not=\emptyset$,
let $E_{n,j}$ be a Banach space.
Define
\[
J_n:=\{j\in J_1\colon M_n\cap U_j\not=\emptyset\}
\]
for $n\geq 2$ and
\[
J:=\{j\in J_1\colon M_\infty\cap U_j\not=\emptyset\}.
\]
We assume that (b), (c), and (d) are satisfied:
\begin{itemize}
\item[(b)]
$E_{n+1,j}$ is vector subspace of
$E_{n,j}$ for all $n\in\N$ and $j\in J_{n+1}$.
Moreover, the inclusion map $E_{n+1,j}\to E_{n,j}$
(which is linear) is continuous.
\item[(c)]
For all $n\in\N$ and $j\in J_n$,
the intersection $M_n\cap U_j$
is open in~$M_n$,
the image $V_{n,j}:=\phi_j(M_n\cap U_j)$
is an open subset of~$E_{n,j}$, and
\[
\phi_{n,j}:=\phi_j|_{M_n\cap \, U_j}\colon M_n\cap U_j\to V_{n,j}
\]
is a chart of~$M_n$.
\item[(d)]
For all $j\in J$,
the image $W_j:=\phi_j(M_\infty\cap U_j)$
is an open subset of $E_j:=\bigcap_{n\in\N}E_{n,j}$,
endowed with the projective limit topology.
Moreover, the map $M_\infty\cap U_j\to W_j$, $x\mto \phi_j(x)$
is a chart for $M_\infty$.
\end{itemize}
\end{numba}
\begin{prop}\label{technical}
In the situation of {\rm\ref{new-sett}},
the Fr\'{e}chet-Lie group~$G$
is $L^1$-regular.
\end{prop}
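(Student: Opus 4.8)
The plan is to verify $L^1$-regularity of $G$ by constructing the evolution map on the level of the Banach manifolds $M_n$ and passing to the projective limit. First I would set up the right-translation action: for $g\in G$ and $v\in T_xG$ with $x\in G$, define $v.g:=T_x\mu_G(\cdot,g)(v)$, and more generally use the maps $\mu_{n+k}^n$ from (a2) to obtain, for each $n$, a $C^k$-map $M_{n+k}\times TM_n\to TM_n$ implementing right translation at the level of $M_n$. Because $\mu_2^1$ restricts to $\mu_G$ on $G$ (by (a1)) and $e$ is a left identity on all of $M_1$ (by (a3)) with the associativity constraint (a4), the vector field $t\mapsto \gamma(t).y(t)$ appearing in the initial value problem~(\ref{ivp}) makes sense not only in $G$ but, after differentiating $\mu_{n+1}^n$, as a Carath\'eodory vector field on $M_n$ depending $\cL^1$-measurably on $t$ through $\gamma\in L^1([0,1],\cg)$. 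The key point is that $\cg=T_eG$ sits inside each model space $E_{n,j}$ via the continuous inclusions of (b), so a fixed $\gamma\in L^1([0,1],\cg)$ is simultaneously an $\cL^1$-curve into the tangent space at $e$ of every $M_n$.

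Next I would apply Lemma~\ref{mfd-case} on each $M_n$ separately. For fixed $n$, the map $f_n\colon M_n\times \cg\to TM_n$, $(p,z)\mapsto z.p$ obtained by differentiating $\mu_{n+1}^n$ in the first slot at $e$ is $C^1$ (indeed $C^k$ for the relevant $k\ge 2$ by raising the index as in Remark~\ref{more-details}(b)), takes values in $T_pM_n$, and is linear in $z$; so Lemma~\ref{mfd-case}(a) gives local existence and uniqueness of Carath\'eodory solutions to $\dot y=f_n(y,\gamma)$ for every $\gamma\in L^1([0,1],\cg)$, and Lemma~\ref{mfd-case}(b) gives, for each starting point (in particular $e$) and each $t_0$, an $\ve_n>0$ such that the solution exists on all of $[0,1]$ once $\|\gamma\|_{\cL^1}\le\ve_n$. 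The solutions for different $n$ are compatible: since $\lambda_m^n\colon M_m\to M_n$ is smooth and intertwines the vector fields $f_m$ and $f_n$ (the relevant diagram of $\mu$'s commutes by (a2) and (a4)), uniqueness forces the solution built in $M_m$ to map to the solution in $M_n$. Hence, on the common domain where all the solutions exist, they assemble to a single curve $\eta\colon [0,1]\to M_\infty=\bigcap_n M_n$, continuous for the projective limit topology, with $\eta(0)=e$; and because $G$ is open in $M_\infty$, after possibly shrinking to a short time interval $\eta$ stays in $G$ and there solves~(\ref{ivp}) in $G$. A standard subdivision-of-$[0,1]$-and-translate argument — using that right translation $R_g$ is a diffeomorphism of $G$ and, more importantly, that the hypotheses of Lemma~\ref{mfd-case} are translation-equivariant via the $\mu_{n+k}^n$ — upgrades local solutions to a solution on all of $[0,1]$; this is exactly the mechanism already underlying $C^0$-regularity of these groups, now run with $\cL^1$ instead of continuous right-hand sides.

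For smoothness of $\Evol\colon L^1([0,1],\cg)\to C([0,1],G)$, I would again work chart by chart on the $M_n$. In a chart $\phi_{n,j}$ the equation becomes $x'=g_{n,j}(x,\gamma)$ with $g_{n,j}$ a $C^\infty$-map that is linear in the $Z=\cg$ slot, so the Banach-manifold evolution map $L^1([0,1],\cg)\to C([0,1],M_n)$ is smooth by the usual parameter-dependence theory for Carath\'eodory ODEs on Banach spaces (the contraction in the proof of Lemma~\ref{carath-ex}(b) depends smoothly on the parameter $\gamma$, so its fixed point does too, by the parametrized Banach fixed point theorem / implicit function theorem in the $FC^\infty$ sense, then bootstrapped to Keller-$C^\infty$). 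Since $M_\infty$ carries the projective limit smooth structure with charts $\phi_j|_{M_\infty\cap U_j}$ onto open subsets of $E_j=\bigcap_n E_{n,j}$ (condition (d)), a map into $C([0,1],M_\infty)$ is smooth iff all its composites with $C([0,1],M_n)\leftarrow$ are, so $\Evol$ is smooth into $C([0,1],M_\infty)$ and hence, $G$ being open in $M_\infty$, into $C([0,1],G)$.

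\textbf{Main obstacle.} The routine estimates are handled by Lemmas \ref{lip-para}--\ref{mfd-case}; the real work is the bookkeeping that makes the ``same'' $\gamma$ drive a compatible family of ODEs on all the $M_n$ at once and that makes the local solutions glue across subdivisions of $[0,1]$ while staying inside the open subgroup $G\subseteq M_\infty$. Concretely, I expect the delicate step to be checking that the right-translation vector fields $f_n$ are genuinely well-defined and $C^k$ despite $M_n$ not being a group — this is where axioms (a1)--(a4), and especially the partial associativity (a4) relating $M_\ell\times M_m\times M_n$, must be used carefully to see that $\mu_{n+1}^n(\cdot,g)$ for $g\in G$ really is the right translation whose derivative at $e$ we want, and that solutions started at $e$ remain in the region where all these identities apply. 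Once that compatibility is in place, everything reduces to the Banach-space results already proved.
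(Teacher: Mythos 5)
Your existence argument follows the paper's own line (local Carath\'eodory solutions on each Banach level via Lemma~\ref{mfd-case}, compatibility under the inclusions and passage to $M_\infty$, globalization by subdividing $[0,1]$ and right-translating), but your smoothness step contains a genuine gap. In a chart of $M_n$ the right-hand side $g_{n,j}$ is \emph{not} $C^\infty$: the only differentiability available is what (a2) provides, namely that $\mu_{n+k}^n$ is $C^k$, so the right-translation map $f_n$ built from $\mu_{n+k}^n$ is only $C^{k-1}$ on the fixed level $M_n$ (the paper takes $\mu_{n+3}^n$ to obtain a $C^2$-map $f_n\colon M_n\times\cm_{n+3}\to TM_n$ in \ref{the-eps}). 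Loss of derivatives in the multiplication is the defining feature of the setting, so the ``usual smooth parameter-dependence theory'' on a fixed Banach level is not available as stated. One could try to repair this by letting $k$ vary (for each $k$, finite-order differentiability of the evolution map $L^1([0,1],\cm_{n+k})\to C([0,1],M_n)$, composed with the continuous linear map $L^1([0,1],\cg)\to L^1([0,1],\cm_{n+k})$), but then you need (i) a theorem on $C^r$-dependence of Carath\'eodory solutions on an $L^1$-parameter when the right-hand side is only finitely often differentiable --- this is the hard technical content of \cite{MEA} and \cite{Nik}, not a routine consequence of a parametrized Banach fixed point argument --- and (ii) an argument that a map into $C([0,1],M_\infty)$ is smooth as soon as its composites into each $C([0,1],M_n)$ are, which presupposes a projective-limit description of $C([0,1],M_\infty)$ as a manifold; neither is supplied.

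The paper sidesteps all of this: after constructing $\Evol([\gamma])$ for \emph{every} $\gamma\in L^1([0,1],\cg)$, it only proves that $\Evol$ is continuous at~$0$ (using the neighbourhood $Q\sub M_{\ell+1}$ with $M_\infty\cap Q\sub P$ and the $0$-neighbourhood $\Omega_P$) and then invokes the automatic-smoothness result \cite[Theorem 1.6]{ANA}, which upgrades existence of evolutions plus continuity of $\Evol$ at the origin to $L^1$-regularity. Two smaller points in your existence sketch also deserve care: the gluing translates the local pieces by elements that a priori lie only in $M_{n+1}$, not in $G$, so the glued curve drops one level to $M_n$ (this loss is why the paper feeds $\cm_{n+4}$-valued data into level $n+1$); and showing that the glued curve lies in $G$ on \emph{all} of $[0,1]$ --- not merely on a short initial interval --- is exactly where the inclusion $M_\infty\cap Q\sub P$, the fact that the rescaled pieces take values in $M_\infty\cap Q$, and the closedness of $G$ under products are used; ``shrinking to a short time interval'' does not suffice, since $G$ is open only in $M_\infty$ and not in any $M_n$.
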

\begin{rem}\label{multiple}
For all $n\in\N$,
we have
\[
x_k\cdots x_2x_1:=x_k(x_{k-1}\cdots x_1)\in M_n
\]
for all $x_1,\ldots, x_k\in M_{n+1}$,
by induction on $k\in\N$ (using~(a2)).
\end{rem}
The following lemma can be shown by standard arguments.
\begin{la}\label{PL-lemma}
In the situation of Proposition~{\rm\ref{technical}},
the following holds:
\begin{itemize}
\item[\rm(a)]
For each $n\in \N$, the inclusion map
$\lambda_n\colon M_\infty\to M_n$
is smooth.
\item[\rm(b)]
$M_\infty$, together with the
inclusion maps $\lambda_n\colon M_\infty\to M_n$,
is the projective limit of
$((M_n)_{n\in \N},(\lambda_m^n)_{n\leq m})$
in the category of sets, the category of topological spaces,
and the category of smooth manifolds
modeled on Fr\'{e}chet spaces.
\item[\rm(c)]
For all real numbers $a<b$, a map
$\eta\colon [a,b]\to M_\infty$
is absolutely continuous
if and only if $\lambda_n\circ \eta\colon [a,b]\to M_n$
is absolutely continuous
for all $n\in\N$.
\item[\rm(d)]
If $a<b$ are real numbers,
$\eta\colon [a,b]\to M_\infty$
is continuous,
$t\in [a,b]$ and $v\in T_{\eta(t)}(M_\infty)$,
then $\dot{\eta}(t)$ exists and equals~$v$
if and only if $(\lambda_n\circ\eta)^{{\boldsymbol \cdot}}(t)$
exists and equals $T\lambda_n(v)$
for all $n\in\N$.
\end{itemize}
\end{la}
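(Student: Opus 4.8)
The plan is to reduce all four assertions to one elementary observation about the charts of \ref{new-sett} and then to feed in the standard ``levelwise'' criteria for $C^\infty$-maps, absolutely continuous curves and pointwise derivatives with values in a countable projective limit of Banach spaces. The key remark is: for $j\in J$ and $x\in M_\infty\cap U_j$ one has, by \ref{new-sett}(c),(d) and the fact that $\lambda_n$ is an inclusion, $\phi_{n,j}(\lambda_n(x))=\phi_j(x)=\psi_j(x)$ for every $n$ (writing $\psi_j:=\phi_j|_{M_\infty\cap U_j}$ and $\iota_n\colon E_j\to E_{n,j}$ for the canonical map). Thus in the charts $\psi_j$ of $M_\infty$ and $\phi_{n,j}$ of $M_n$ the map $\lambda_n$ is just the inclusion $W_j\hookrightarrow V_{n,j}$, i.e.\ the restriction of the continuous linear inclusion $E_j\hookrightarrow E_{n,j}$; since continuous linear maps between open subsets of locally convex spaces are $C^\infty$, this proves~(a) (the $\psi_j$, $j\in J$, cover $M_\infty$), and passing to tangent maps shows that $T\lambda_n$ is, in these charts, again that inclusion, which I use below.

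For~(b): because every $\lambda_m^n$ is an inclusion, a thread in the projective limit of the $M_n$ has all components equal to one point of $\bigcap_n M_n=M_\infty$, and conversely, giving the bijection onto the set-theoretic projective limit. For topologies, $\lambda_n$ is continuous by~(a), so the manifold topology of $M_\infty$ refines the initial topology with respect to the $\lambda_n$; conversely, for a subbasic open set $\iota_n^{-1}(O_n)$ of $E_j$ ($O_n\sub E_{n,j}$ open) the chart identity gives $\psi_j^{-1}(\iota_n^{-1}(O_n))=\lambda_n^{-1}(\phi_{n,j}^{-1}(O_n))\cap(M_\infty\cap U_j)$, which is initial-open, and such sets generate the manifold topology, so the two topologies agree and $M_\infty$ is the projective limit in the category of topological spaces. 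For the smooth category, given a Fr\'echet manifold $N$ and smooth $h_n\colon N\to M_n$ with $\lambda_m^n\circ h_m=h_n$, the $h_n$ have, again since the $\lambda_m^n$ are inclusions, a common set-theoretic value $h\colon N\to M_\infty$, which is continuous into the initial topology as each $h_n=\lambda_n\circ h$ is; in charts $\psi_j$ and $\kappa$ one has $\iota_n\circ(\psi_j\circ h\circ\kappa^{-1})=\phi_{n,j}\circ h_n\circ\kappa^{-1}$, which is $C^\infty$ for every $n$, hence $\psi_j\circ h\circ\kappa^{-1}$ is $C^\infty$ into $E_j=\varprojlim E_{n,j}$; so $h$ is smooth and is clearly the unique smooth lift, and $M_\infty=\varprojlim M_n$ in the category of Fr\'echet manifolds.

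Parts~(c) and~(d) are local: I would fix $t_0\in[a,b]$, choose $j\in J$ with $\eta(t_0)\in U_j$, and pass to a neighbourhood of $t_0$ mapped by $\eta$ into $M_\infty\cap U_j$ (for the ``if'' direction of~(c) using that absolute continuity of the $\lambda_n\circ\eta$ forces $\eta$ to be continuous into the initial topology; for~(d) and the other direction of~(c), continuity of $\eta$ is hypothesised). On that neighbourhood the chart identity gives $\phi_{n,j}\circ(\lambda_n\circ\eta)=\iota_n\circ(\psi_j\circ\eta)$. Then I invoke: a curve into $E_j=\varprojlim E_{n,j}$ is absolutely continuous iff all its components in the $E_{n,j}$ are (the $\cL^1$-theory of the Fr\'echet space $E_j$ being the projective limit of those of the $E_{n,j}$), and it is differentiable at $t_0$ with a prescribed derivative iff all its components are (the topology of $E_j$ being initial, difference quotients converge in $E_j$ precisely when all their $\iota_n$-images do, and $\iota_n(T\psi_j(v))=T\phi_{n,j}(T\lambda_n(v))$). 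Read through the charts, these yield~(c) and~(d); the forward implications also follow directly from smoothness of $\lambda_n$ and the chain rule.

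I do not expect a serious obstacle here. Beyond the chain rule and the set/topology bookkeeping, the only real ingredient is the levelwise characterisation, for a countable projective limit $E=\varprojlim E_n$ of Banach spaces, of $C^\infty$-maps (resp.\ absolutely continuous curves, resp.\ pointwise derivatives) into $E$ in terms of their components; this is standard (cf.\ \cite{GaN,Nee}) and is exactly where the hypotheses that the $E_{n,j}$ be Banach with continuous inclusions are used. The write-up essentially consists of recording the chart identity once and citing these facts.
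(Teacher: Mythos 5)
Correct, and essentially the intended argument: the paper gives no proof of Lemma~\ref{PL-lemma} beyond the remark that it follows ``by standard arguments'', and your reduction via the chart identity (in the charts of \ref{new-sett}\,(c),(d) the inclusion $\lambda_n$ becomes the restriction to $W_j$ of the continuous linear inclusion $E_j\to E_{n,j}$) combined with the levelwise criteria for smooth maps, absolutely continuous curves and pointwise derivatives with values in the countable projective limit $E_j=\bigcap_{n\in\N}E_{n,j}$ is precisely that standard route, and it goes through without gaps. One bibliographic point: the levelwise characterization of $\cL^1$-functions and of absolutely continuous curves in a countable projective limit of Banach spaces is established in \cite{MEA} (cf.\ also \cite{Nik,GaH}), which is the more apt citation for parts (c) and (d) than \cite{GaN,Nee}.
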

\section{Proof of Proposition~\ref{technical}
and Theorem~\ref{main-thm}}
It suffices to prove the proposition, Theorem~\ref{main-thm}
being a special case.
\begin{numba}\label{the-k}
Let $P$ be an open identity neighbourhood in~$G$.
Then $P$ is an open $e$-neighbourhood in~$M_\infty$.
Since $M_\infty={\displaystyle\lim_{\textstyle \leftarrow}}\, M_n$
as a topological space, there exists $\ell\in \N$
and an open $e$-neighbourhood $Q$ in~$M_{\ell+1}$
such that
\[
M_\infty\cap Q\, \sub \, P.
\]
\end{numba}
\begin{numba}
For all $n< m$ in $\N$ and $g\in M_n$, the map
\[
R^{n,m}_g:=\mu_m^n(\cdot,g)\colon M_m\to M_n,\quad x\mto xg
\]
is $C^{m-n}$. For all $n<m<k $ in $\N$ and $g\in M_n$, we have
\begin{equation}\label{triv-1}
R^{n,k}_g=R^{n,m}_g\circ \lambda_k^m
\end{equation}
and, for $h\in M_m$,
\begin{equation}\label{triv-2}
R^{n,m}_g\circ R^{m,k}_h=R^{n,k}_{hg},
\end{equation}
using (a4) in~\ref{new-sett}.
\end{numba}
%
For $n\in\N$, let $\|\cdot\|_n$ be a norm on $\cm_n:=T_eM_n$
defining its topology.
\begin{numba}\label{the-eps}
For $n\in\N$, the mapping $\mu_{n+3}^n\colon M_{n+3}\times M_n\to M_n$
is $C^3$. Identifying $T(M_{n+3}\times M_n)$
with $TM_{n+3}\times TM_n$
as usual, we obtain a $C^2$-map
\[
T\mu_{n+3}^n\colon T M_{n+3}\times TM_n\to TM_n.
\]
The zero-section
$M_n\to TM_n$, $x\mto 0_x\in T_x(M_n)$
is a smooth map.
Hence
\[
f_n\colon M_n\times \cm_{n+3}\to T M_n,\quad (x,v)\mto T\mu_{n+3}^n(v,0_x)
=TR_x^{n,n+3}(v)
\]
is a $C^2$-map. By construction,
$f_n(x,\cdot)\colon \cm_{n+3}\to T_x (M_n)$
is linear for each $x\in M_n$.
By Lemma~\ref{mfd-case}, the ODE
\[
\dot{y}(t)=f_n(y(t),\gamma(t))
\]
satisfies local existence and local uniqueness
of Carath\'{e}odory solutions
for all real numbers $a<b$ and all
$\gamma\in \cL^1([a,b],\cm_{n+3})$.
Moreover, there exists $\ve_n>0$
such that, for each $\gamma\in\cL^1([0,1],\cm_{n+3})$
with $\|\gamma\|_{\cL^1}\leq \ve_n$,
the initial value problem
\[
\dot{y}(t)=f_n(y(t)),\gamma(t)),\quad y(0)=e
\]
has a Carath\'{e}odory
solution $s_n(\gamma)\colon [0,1]\to M_n$
defined on all of $[0,1]$.
If $n=\ell+1$ (where $\ell$ is as in \ref{the-k}),
we may assume that each $s_n(\gamma)$
only takes values in~$Q$.
To see this, apply Lemma~\ref{mfd-case}
to the restriction of $f_n$ to a map $Q\times \cm_{n+3}\to TQ$.
\end{numba}
\begin{numba}\label{disc-zeta}
Let $a<b$ be real numbers, $n\in \N$,
$\gamma\in \cL^1([a,b],\cm_{n+4})$,
and $\eta\colon [a,b]\to M_{n+1}$
be a Carath\'{e}odory solution to
$\dot{y}(t)=f_{n+1}(y(t),\gamma(t))$.
For each $x\in M_n$, the function
\[
\zeta\colon [a,b]\to M_n,\quad t\mto \eta(t)x
\]
then is a Carath\'{e}odory
solution to $\dot{y}(t)=f_n(y(t),T_e(\lambda_{n+4}^{n+3})(\gamma(t)))$.
In fact, $\zeta=R^{n,n+1}_x\circ\eta$
is absolutely continuous
(see \cite[Lemma~3.18\,(a)]{MEA}).
For each $t\in [a,b]$
such that $\dot{\eta}(t)$
exists and equals $f_{n+1}(\eta(t),\gamma(t))$
(which is the case for almost all~$t$),
also $\zeta=R^{n,n+1}_x\circ \eta$
is differentiable at~$t$ and
\begin{eqnarray*}
\dot{\zeta}(t) &=&
TR^{n,n+1}_x\dot{\eta}(t)=
TR^{n,n+1}_xTR^{n+1,n+4}_{\eta(t)}\gamma(t)
= TR^{n,n+4}_{\eta(t)x}\gamma(t)\\
&=&
TR^{n,n+4}_{\zeta(t)}\gamma(t)
=TR^{n,n+3}_{\zeta(t)}T\lambda^{n+3}_{n+4}\gamma(t)
=f_n\hspace*{-.8mm}\left(\zeta(t),T_e\lambda^{n+3}_{n+4}\gamma(t)\right),
\end{eqnarray*}
using (\ref{triv-2}) for the third equality
and (\ref{triv-1}) for the fifth equality.
\end{numba}
\begin{numba}\label{global}
Let $n\in \N$.
If $\beta\in\cL^1([0,1],\cm_{n+4})$
is arbitrary,
there exists $N\in \N$ such that
\[
\|\beta_k\|_{\cL^1}\leq \ve_{n+1}
\]
holds for the $\cL^1$-function
\[
\beta_k\colon [0,1]\to\cm_{n+4},\quad
t\, \mto\, \frac{1}{N}\, \beta\left( \frac{k+t}{N}\right)
\]
for
all $k\in \{0,\ldots, N-1\}$ (see \cite[Lemma~5.26]{MEA}).
Let $\theta_k\colon [0,1]\to M_{n+1}$
be the Carath\'{e}odory solution to
\[
\dot{y}(t)=f_{n+1}(y(t),\beta_k(t)),\quad y(0)=e.
\]
Using the Chain Rule,
we see that $\zeta_k\colon [k/N,(k+1)/N]\to M_{n+1}$,
$t\mto \theta_k(Nt-k)$ is absolutely
continuous and solves
\[
\dot{y}(t)=f_{n+1}(y(t),\beta(t)),\quad y(k/N)=e.
\]
We define $\zeta\colon [0,1]\to M_n$
piecewise via $\zeta(t):=\zeta_0(t)$ if $t\in [0,1/N]$,
\[
\zeta(t):=\zeta_k(t)\hspace*{.2mm}\zeta_{k-1}(k/N)\cdots \zeta_1(2/N)\zeta_0(1/N)
\]
for $t\in [k/N,(k+1)/N]$ with $k\in \{1,\ldots, N-1\}$,
multiplying from right to left as in~\ref{multiple}.
Then $\zeta$ takes values in~$M_n$, by~\ref{multiple}.
Moreover, $\zeta\colon [0,1]\to M_n$
is absolutely continuous, $\zeta(0)=e$
and $\dot{\zeta}(t)=f_n(\zeta(t),T\lambda_{n+4}^{n+3}\beta(t))$
for almost all $t\in [0,1]$, by~\ref{disc-zeta}. Thus $\zeta$
solves
\[
\dot{y}(t)=f_n(y(t),T_e\lambda^{n+3}_{n+4}\beta(t)).
\]
\end{numba}
\begin{numba}
If $\gamma\in\cL^1([0,1],\cg)$
with $\cg:=T_eG=T_e(M_\infty)$,
then $\gamma_n:=T_e\lambda_{n+3}\circ\gamma\in \cL^1([0,1],\cm_{n+3})$
for each $n\in \N$.
By~\ref{global},
the initial value problem
\[
\dot{y}(t)=f_n(y(t),\gamma_n(t)),\quad y(0)=e
\]
has a unique Carath\'{e}odory solution
$\eta_n\colon [0,1]\to M_n$,
using that $\gamma_n(t)=T\lambda_{n+4}^{n+3}\circ\gamma_{n+1}$.
If $m> n$ in $\N$,
then $\lambda_m^n\circ \eta_m\colon [0,1]\to M_n$ is absolutely
continuous and $(\lambda_m^n\circ\eta_m)(0)=e$.
For each $t\in [0,1]$
such that $\dot{\eta}_m(t)$ exists and
equals $f_m(\eta_m(t),\gamma_m(t))$, 
also $(\lambda^n_m\circ\eta_m)^{{\boldsymbol \cdot}}(t)$
exists and equals
\begin{eqnarray*}
T\lambda_m^n\dot{\eta}_m(t)=T\lambda_m^n TR_{\eta_m(t)}^{m,m+3}\gamma_m(t)
= TR_{\lambda_m^n(\eta_m(t))}^{n,n+3}\gamma_n(t)
=f_n(\lambda_m^n\eta_m(t),\gamma_n(t)),
\end{eqnarray*}
exploiting that $\lambda_m^n\circ R_{\eta_m(t)}^{m,m+3}\circ \lambda_m
= R^{n,n+3}_{\lambda_m^n(\eta_m(t))}\circ \lambda_n$.
By uniqueness,
\[
\eta_n=\lambda_m^n\circ \eta_m.
\]
Lemma~\ref{PL-lemma}\,(c) provides a unique
absolutely continuous map $\eta\colon [0,1]\to M_\infty$
such that $\lambda_n\circ \eta=\eta_n$
for all $n\in \N$.
Then
\begin{equation}\label{hencein}
\eta_n([0,1])=\eta([0,1])\sub M_\infty\quad\mbox{for all $\,n\in\N$.}
\end{equation}
Let us apply~\ref{global}
with $n:=\ell$ and $\beta:=\gamma_{\ell+1}$.
Then $\eta_\ell$ equals $\zeta$ constructed in~\ref{global}.
Using~$N$ as in \ref{global},
we define $\alpha_k\colon [0,1]\to\cg$
via
\[
\alpha_k(t):=\frac{1}{N}\hspace*{.3mm}\gamma\hspace*{-.6mm}\left(\frac{k+t}{N}\right).
\]
Then $T\lambda_{\ell+1}\circ \alpha_k=\beta_k$
for $\beta_k$ as in \ref{global},
whence the solution $\theta_k\colon [0,1]\to M_{\ell+1}$
to $\dot{y}(t)=f_{\ell+1}(y(t),\beta_k(t))$,
$y(0)=e$ has image
\[
\theta_k([0,1])\sub M_\infty,
\]
as we can apply (\ref{hencein})
with $\ell+1$ and $\alpha_k$ in place of~$n$
and~$\gamma$.
On the other hand,
\[
\theta_k
=s_{\ell+1}(\beta_k)
\]
takes values in~$Q$ (see \ref{the-eps}).
Hence
\[
\theta_k([0,1])\, \sub \, M_\infty\cap Q\, \sub \, U\, \sub \, G.
\]
For each $t\in [0,1]$,
the element $\zeta(t)\in M_\ell$
is a product of function values
of $\zeta_k$ (and hence of $\theta_k$)
in~\ref{global}, all of which are in~$G$
by the preceding.
Since $G$ is closed under products,
we deduce that
\[
\eta(t)=\eta_\ell(t)=\zeta(t)\in G\quad
\mbox{for all $\, t\in[0,1]$.}
\]
For $n\in\N$, let $A_n\sub [0,1]$
be a Borel set of Lebesgue measure~$0$
such that $\dot{\eta}_n(t)$
exists for all $t\in [0,1]\setminus A_n$
and equals $f_n(\eta_n(t),\gamma_n(t))$.
Then $A:=\bigcup_{n\in\N}A_n$
is a Borel set of measure $0$.
For $g\in G$, let $R_g\colon G\to G$, $x\mto xg$
the smooth right multiplication with~$g$.
Then
\[
\lambda_n \circ R_g= R_g^{n,m}\circ \lambda_m
\]
for all $m> n$ in~$\N$.
For $t\in [0,1]\setminus A$,
let
\[
v:=TR_{\eta(t)}(\gamma(t)).
\]
Then $\lambda_n\circ \eta=\eta_n$
is differentiable at $t$ for each $n\in\N$,
with
\begin{eqnarray*}
(\lambda_n\circ \eta)^{\textstyle .}(t) & =& \dot{\eta}_n(t)
=TR_{\eta_n(t)}^{n,n+3}\gamma_n(t)
=TR_{\eta_n(t)}^{n,n+3}T\lambda_{n+3}\gamma(t)\\
& =& T\lambda_nTR_{\eta(t)}\gamma(t)=T\lambda_n(v).
\end{eqnarray*}
By Lemma~\ref{PL-lemma}\,(d),
$\dot{\eta}(t)$ exists and equals
$v=TR_{\eta(t)}\gamma(t)$.
Thus
\[
\dot{\eta}(t)=\gamma(t).\eta(t).
\]
We have shown that $\eta=\Evol([\gamma])$.
\end{numba}
\begin{numba}
Note that the linear map
\[
L^1([0,1],\cg)\to L^1([0,1],\cm_{\ell+4}),\quad [\gamma]\mto [T\lambda_{\ell+4}\circ\gamma]
\]
is continuous, whence
\[
\Omega_P:=\{[\gamma]\in L^1([0,1],\cg)\colon \|T\lambda_{\ell+4}\circ\gamma\|_{\cL^1}\leq\ve\}
\]
is a $0$-neighbourhood in $L^1([0,1],\cg)$.
By \ref{the-eps}, we have
\[
\Evol([\gamma])=s_{\ell+1}(T\lambda_{\ell+3}\circ\gamma)\in C([0,1],P)
\]
for each $[\gamma]\in \Omega_P$.
As sets of the form $C([0,1],P)$
form a basis of identity-neighbourhoods in $C([0,1],G)$,
we deduce that
\[
\Evol\colon L^1([0,1],\cg)\to C([0,1],G)
\]
is continuous at~$0$.
Hence $\Evol\colon L^1([0,1],\cg)\to C([0,1],G)$
is smooth (and $G$ an $L^1$-regular Fr\'{e}chet-Lie group),
by \cite[Theorem 1.6]{ANA}.
\end{numba}
This completes the proof of Proposition~\ref{technical}. $\,\square$
{\small{\bf Helge Gl\"{o}ckner}, Universit\"{a}t Paderborn, Warburger Str.\ 100,
33098 Paderborn,\linebreak
Germany; glockner@math.uni-paderborn.de\\[3mm]
{\bf Ali Suri}, Universit\"{a}t Paderborn, Warburger Str.\ 100,
33098 Paderborn, Germany; asuri@math.uni-paderborn.de}\vfill
\end{document}